\DeclareMathOperator{\rank}{\mathrm{rank}}
\DeclareMathOperator{\fchar}{\mathrm{char}}
\DeclareMathOperator{\End}{\mathrm{End}}
\DeclareMathOperator{\Aut}{\mathrm{Aut}}
\DeclareMathOperator{\Fr}{Fr}
\DeclareMathOperator{\Tr}{Tr}
\DeclareMathOperator{\Gal}{Gal}
\DeclareMathOperator{\Jac}{Jac}
\DeclareMathOperator{\Hom}{Hom}
\DeclareMathOperator{\ord}{ord}
\DeclareMathOperator{\nrd}{nrd}
\DeclareMathOperator{\Spec}{Spec}
\newcommand{\Z}{\mathbb{Z}}
\newcommand{\Q}{\mathbb{Q}}
\newcommand{\R}{\mathbb{R}}
\newcommand{\C}{\mathbb{C}}
\newcommand{\F}{\mathbb{F}}
\renewcommand{\O}{\mathcal{O}}
\def\mkfancyprefix#1#2{%
  \@namedef{fancyref#1labelprefix}{#1}%
  \begingroup\def\x{\endgroup\frefformat{plain}}%
    \expandafter\x\csname fancyref#1labelprefix\endcsname
      {\MakeLowercase{#2}\fancyrefdefaultspacing##1}%
  \begingroup\def\x{\endgroup\Frefformat{plain}}%
    \expandafter\x\csname fancyref#1labelprefix\endcsname
      {#2\fancyrefdefaultspacing##1}%
}
\newtheorem{theorem}{Theorem}
\newtheorem{definition}[theorem]{Definition}
\newtheorem{proposition}[theorem]{Proposition}
\newtheorem{lemma}[theorem]{Lemma}
\newtheorem{corollary}[theorem]{Corollary}
\newtheorem{remark}[theorem]{Remark}
\newtheorem{example}[theorem]{Example}
\title{Abelian Varieties with $p$-rank Zero}
\author[1]{Yan Bo Ti}
\author[2]{Gabriel Verret}
\author[2]{Lukas Zobernig}
\affil[1]{DSO National Laboratories, Singapore \authorcr
\href{mailto:yanbo.ti@gmail.com}{\texttt{yanbo.ti@gmail.com}}}
\affil[2]{Department of Mathematics, University of Auckland, New Zealand \authorcr
\texttt{\{}\href{mailto:g.verret@auckland.ac.nz}{\texttt{g.verret}}, \href{mailto:lukas.zobernig@auckland.ac.nz}{\texttt{lukas.zobernig}}\texttt{\}@auckland.ac.nz}}
\date{\vspace{-5ex}}
\begin{document}
\maketitle

\begin{abstract}
There is a well known theorem by Deuring which gives a criterion for when the reduction of an elliptic curve with complex multiplication (CM) by the ring of integers of an imaginary quadratic field has ordinary or supersingular reduction. We generalise this and a similar theorem by Goren in dimension 2, and classify the $p$-torsion group scheme of the reduction of 3-dimensional abelian varieties with CM by the ring of integers of a cyclic sextic CM field. We also prove a theorem in arbitrary dimension $g$ that distinguishes ordinary and superspecial reduction for abelian varieties with CM by a cyclic CM field of degree $2g$.

As an application, we give algorithms to construct supersingular non-superspecial, and superspecial abelian varieties of dimension 2 (surfaces) and dimension 3, and show that all such varieties have non-integer endomorphisms of small degree.
\end{abstract}

\section{Introduction}
Fix some prime $p$ and consider elliptic curves defined over $\overline{\F}_{p}$. A general such curve $E$ will be \emph{ordinary} (i.e. the $p$-torsion of $E$ has $p$ many geometric points). The other possible case, called \emph{supersingular} (here the $p$-torsion just contains the point at the origin), is far more exceptional and in fact only occurs for finitely many elliptic curves (approximately $\lfloor p/12 \rfloor$ many), all of which are defined over $\F_{p^2}$. It is quite hard in practice to write down ``random'' supersingular elliptic curves, we either use known families of such curves (e.g. $y^2 = x^3 + 1$ if $p \equiv 2 \pmod{3}$ or $y^2 = x^3 + x$ if $p \equiv 3 \pmod{4}$) or the so called CM method to construct them from reductions of CM elliptic curves defined over some number field.

The correct way to generalise the term \emph{supersingular} to higher dimension abelian varieties is to consider those whose associated Newton polygon has slopes all $1/2$. We will see how this property is related to the $p$-torsion group scheme of abelian varieties in characteristic $p$ and their associated $p$-rank and $a$-number. For abelian surfaces, the $p$-rank directly informs about supersingularity, while in dimension 3 and higher it is no longer true that all abelian varieties with $p$-rank equal to zero are supersingular. Nevertheless, those \emph{strata} of the moduli space with $p$-rank zero are still interesting to study, and for example when the $a$-number equals the genus of the variety in question we find so called \emph{superspecial} abelian varieties that are isomorphic to a product of supersingular elliptic curves.

\paragraph{Motivation}
We are interested in constructing hyperelliptic curves whose Jacobian variety has $p$-rank zero. By dimension considerations of higher genus moduli spaces, we can likely only hope for families of such curves or to construct them using a generalisation of the CM method to higher dimension. Another necessary ingredient are reduction theorems that inform us about the resulting types of the corresponding $p$-torsion group schemes after reducing some chosen CM abelian variety.

\paragraph{Organisation}
In the remainder of this section we will recall some important notions: The structure of $p$-torsion group schemes, those parts of complex multiplication theory of abelian varieties necessary for us, and their associated reduction theory. An important part of this is the Shimura--Taniyama formula which we will use in the proofs of our theorems. In \Fref{sec:cyclic_cm_types} we study CM types of cyclic CM fields. \Fref{sec:higher_dim_reduction} is devoted to generalising the theorems of Deuring and Goren to dimension 3 and higher and the proofs of those theorems. In Sections \ref{sec:genus_2_construction} and \ref{sec:higher_genus_construction} we use the reduction theorems to give algorithms for constructing hyperelliptic $p$-rank zero curves and work through some explicit examples. Finally, in \Fref{sec:endomorphisms} we study the endomorphism rings of reductions of CM curves.

\subsection{The $p$-rank and $a$-number of Abelian Varieties and Curves}
Fix a prime $p$, and an algebraically closed field $k$ of characteristic $p$ containing $\F_p$. Consider the \emph{additive group scheme} $\mathbb{G}_a = \Spec k[X]$ and the \emph{multiplicative group scheme} $\mathbb{G}_m = \Spec k[X,X^{-1}]$. The kernel of the relative Frobenius on these yields the finite group schemes $\alpha_p \cong \Spec k[X]/X^p$ and $\mu_p \cong \Spec k[X]/(X^p-1)$, respectively. The Cartier dual of $\alpha_p$ is itself, and the Cartier dual of $\mu_p$ is the constant group scheme $\Z/p\Z$.

Let $A/k$ be an abelian variety of dimension $g$ and consider its $p$-torsion $A[p]$ as a group scheme. The \emph{$p$-rank} of $A$ is given by $f(A) = \dim_{\F_p} \Hom(\mu_p, A[p])$. Similarly, the $a$-number of $A$ is given by $a(A) = \dim_k \Hom(\alpha_p, A[p])$. Thus, geometrically, $A[p](k) \cong (\Z/p\Z)^f$. It holds that $0 \leq f \leq g$ and $1 \leq a+f \leq g$.

We define the $p$-rank and the $a$-number of a genus g curve $C$ as the corresponding invariants of its Jacobian $\Jac(C)$ as a $g$-dimensional abelian variety.

\begin{example}[\citet{pries2008}]\label{ex:genus_2_p_torsion}
Let $A/k$ be of dimension 2, i.e. an abelian surface, then we have the following possible types:
\begin{center}
\begin{tabular}{c|c|c|c|c}
$f(A)$ & $a(A)$ & $A[p]$ & Type & Codim. \\
\hline
$2$ & $0$ & $L^2$ & ordinary & 0 \\
$1$ & $1$ & $L \oplus I_{1,1}$ & non-ordinary & 1\\
$0$ & $1$ & $I_{2,1}$ & supersingular & 2 \\
$0$ & $2$ & $I_{1,1}^2$ & superspecial & 3
\end{tabular}
\end{center}
Here $L = \Z/p\Z \oplus \mu_p$ is the $p$-torsion of an ordinary elliptic curve, whereas $I_{1,1}$ is the $p$-torsion of a supersingular elliptic curve. It is also the unique local-local $BT_1$ group scheme of rank $p^2$ and fits into the following non-split exact sequence: $0 \to \alpha_p \to I_{1,1} \to \alpha_p \to 0$. Similarly, $I_{2,1}$ is the unique $BT_1$ group scheme of rank $p^4$ with $p$-rank 0 and $a$-number 1. The codimension of the associated strata in the full moduli space of abelian surfaces $\mathcal{A}_2$ is given as well. Since $\mathcal{A}_2$ is 3-dimensional, we can expect there to be a discrete number of superspecial points only.

Note that from how supersingularity is usually defined, a superspecial abelian surface is in particular supersingular. Hence, those abelian surfaces with $p$-torsion group scheme $I_{2,1}$ we should really call supersingular non-superspecial.
\end{example}

\begin{example}[\citet{pries2008}]\label{ex:genus_3_p_torsion}
Let $A/k$ be of dimension 3, then we have the following possible types:
\begin{center}
\begin{tabular}{c|c|c|c|c}
$f(A)$ & $a(A)$ & $A[p]$ & Type & Codim. \\
\hline
$3$ & $0$ & $L^3$ & ordinary & 0 \\
$2$ & $1$ & $L^2 \oplus I_{1,1}$ & non-ordinary (1) & 1 \\
$1$ & $1$ & $L \oplus I_{2,1}$ & non-ordinary (2) & 2 \\
$1$ & $2$ & $L \oplus I_{1,1}^2$ & non-ordinary (3) & 3 \\
$0$ & $1$ & $I_{3,1}$ & mixed (1) & 3 \\
$0$ & $2$ & $I_{3,2}$ & mixed (2) & 4 \\
$0$ & $2$ & $I_{1,1} \oplus I_{2,1}$ & supersingular & 5 \\
$0$ & $3$ & $I_{1,1}^3$ & superspecial & 6 \\
\end{tabular}
\end{center}
As mentioned in \citet{pries2019}, the codimension 3 and 4 strata with $A[p]$ torsion group scheme equal to $I_{3,1}$ and $I_{3,2}$, respectively, usually have \emph{Newton slopes} $(1/3,2/3)$, but can also have slopes all $1/2$ (only in this case the abelian variety $A$ is supersingular). On the other hand, the codimension 5 stratum only contains supersingular points.
\end{example}

\subsection{Complex Multiplication}\label{sec:complex_mult}
We call a number field $K$ a \emph{complex multiplication (CM) field} if $K$ is a totally imaginary quadratic extension of a totally real field $K_0$. The generator of $\Gal(K/K_0)$ is complex conjugation and commutes with all embeddings $\phi \in \Hom(K, \C)$; we denote its action on $x \in K$ by $\overline{x}$. Let $K$ be a CM field of degree $[K:\Q] = 2g$, then a \emph{CM type} of $K$ is a set $\Phi = (\phi_1,\dots,\phi_g)$ of embeddings $\phi_i: K \to \C$ such that $\Phi \cup \overline{\Phi} = \Hom(K,\C)$. Note that then no two embeddings $\phi_i$ and $\phi_j$ are conjugates of each other. We say that $\Phi$ is \emph{primitive} (sometimes also called \emph{simple}) if it is not the extension of a CM type on a CM subfield of $K$. Otherwise we say that the type is \emph{imprimitive}. If $K$ is normal, then a type $\Phi$ is primitive if and only if it is not fixed by any automorphism of $K$, i.e. if and only if there exists no $\gamma \in \Gal(K/\Q)$ such that $\Phi\gamma = \Phi$. We say that two CM types $\Phi$ and $\Phi'$ are \emph{equivalent} if there exists an automorphism $\gamma \in \Gal(K/\Q)$ such that $\Phi\gamma = \Phi'$.

Define the \emph{type trace} $T_\Phi(x) = \sum \phi_i(x)$ and \emph{type norm} $N_\Phi(x) = \prod \phi_i(x)$. Define the \emph{reflex field} $K'$ of $K$ (relative to a CM type $\Phi$) as the field generated over $\Q$ by all the elements $T_\Phi(x)$ with $x \in K$. Let $L$ be a finite Galois extension of $\Q$ containing $K$ with Galois group $G = \Gal(L/\Q)$, then we may lift the CM type $\Phi$ from $K$ to $L$, i.e. $\Phi_L = \{\sigma \in G \,|\, \sigma \text{ induces some } \phi_i \in \Phi\}$. Note that $\Phi^{-1}_L$ is a type lifted from the reflex field $K'$. Define the \emph{reflex type} $\Phi'$ on $K'$ corresponding to $\Phi$ on $K$ as the type which $\Phi^{-1}_L$ was lifted from. Similarly, we define the \emph{reflex norm} $N_{\Phi'}(x) = \prod \phi_i'(x)$. Note that $N_\Phi : K \to K'$, $N_{\Phi'} : K' \to K$, and that both are multiplicative maps that are well defined on ideals.

Let $\mathcal{O}$ be an order in $K$, for example the maximal order (ring of integers) $\mathcal{O}_K$ of $K$. Let $A$ be an abelian variety of dimension $g$, and denote by $\End(A)_\Q = \End(A) \otimes \Q$ the \emph{endomorphism algebra} of $A$. For our purposes, we say that $A$ has \emph{complex multiplication} by $(\mathcal{O},\Phi)$ if there exists an ideal $I \subset \mathcal{O}$ such that $A = \C^g/\Phi(I)$. See \citet[Section 1.3]{chaiconradoort2014} for a more general definition. Then there exists an embedding $\iota: K \to \End(A)_\Q$, and hence the data we should consider is the tuple $(A,\iota)$. We often assume the embedding $\iota$ to be given implicitly. We say that $A$ has CM by $(K,\Phi)$ if $A$ has CM by some order of $K$ and type $\Phi$. We call $A$ \emph{principal}, if its endomorphism ring $\End(A)$ is exactly $\mathcal{O}_K$ and not some smaller suborder.

\begin{theorem}[\cite{shta1961}, Section II.8.5 Proposition 30; \cite{lang1983}, Section 3.1 Theorem 1.1]\label{thm:reflex_field_of_definition}
Let $(A,\iota)$ be an abelian variety with CM by $(K,\Phi)$ and let $(K',\Phi')$ be the reflex field and type. If $(A,\iota)$ is defined over a number field $k$, then $K' \subset k$.
\end{theorem}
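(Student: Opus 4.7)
The plan is to extract the reflex field $K'$ intrinsically from the action of $K$ on the Lie algebra of $A$, and then apply Galois descent to show that the CM type is invariant under $\Gal(\bar{\Q}/k)$.

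First, I would record the intrinsic description of the CM type: via $\iota$, the Lie algebra $\mathrm{Lie}(A/k)$ becomes an $\mathcal{O}$-module, and after extending scalars to $\bar{\Q}$ (embedded in $\C$ via any fixed embedding) it decomposes as $\mathrm{Lie}(A/k)\otimes_k\bar{\Q}\cong\bigoplus_{\phi\in\Phi}\bar{\Q}_\phi$, where $\iota(a)$ acts by $\phi(a)$ on the summand $\bar{\Q}_\phi$. This matches the complex analytic description coming from $A(\C)=\C^g/\Phi(I)$ and identifies $\Phi$ with the set of embeddings appearing as eigenvalues of $\iota$ on $\mathrm{Lie}(A)$.

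The second step is Galois descent. Fix $\sigma\in\Gal(\bar{\Q}/k)$; since $A$ and the endomorphisms in $\iota(\mathcal{O})$ are defined over $k$, the action of $1\otimes\sigma$ on $\mathrm{Lie}(A/k)\otimes_k\bar{\Q}$ commutes with the action of $\iota(\mathcal{O})$. Applying $1\otimes\sigma$ to a vector in the $\phi$-eigenspace produces a vector on which $\iota(a)$ acts by $\sigma(\phi(a))=(\sigma\phi)(a)$, so $1\otimes\sigma$ carries the $\phi$-eigenspace bijectively to the $(\sigma\phi)$-eigenspace. Hence $\sigma\phi\in\Phi$ for every $\phi\in\Phi$, and since $\Phi$ is finite, $\sigma\Phi=\Phi$.

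To conclude, I would invoke the characterisation of $K'$ as the fixed field of the stabiliser $\{\sigma\in\Gal(\bar{\Q}/\Q):\sigma\Phi=\Phi\}$. This is because $K'$ is generated by the type traces $T_\Phi(x)=\sum_{\phi\in\Phi}\phi(x)$, and an automorphism $\sigma$ fixes every $T_\Phi(x)$ if and only if $\sigma\Phi=\Phi$: one direction is immediate, and the other follows because the type traces determine the elementary symmetric functions of $\Phi$. Combined with the previous step this gives $\Gal(\bar{\Q}/k)\subset\Gal(\bar{\Q}/K')$, and hence $K'\subset k$. The main obstacle I anticipate is the very first step: matching the complex analytic description of $\Phi$ with the algebraic eigenspace decomposition of $\mathrm{Lie}(A/k)\otimes_k\bar{\Q}$, and in particular verifying that this decomposition is genuinely defined over $k$. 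This rests on $\iota(\mathcal{O})$ lying in the $k$-rational endomorphism ring, so that characteristic polynomials of $\iota(a)$ acting on $\mathrm{Lie}(A/k)$ have coefficients in $k$; once that is in place, the rest is bookkeeping with the Galois action on embeddings.
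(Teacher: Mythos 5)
Your argument is correct. Note that the paper itself offers no proof of this statement; it is quoted from Shimura--Taniyama and Lang, and your reconstruction is essentially the argument given in those sources: read off $\Phi$ from the eigenvalues of $\iota(a)$ on $\mathrm{Lie}(A/k)\otimes_k\bar{\Q}$, observe that $\Gal(\bar{\Q}/k)$ permutes the eigenspaces and hence stabilises $\Phi$, and identify $K'$ with the fixed field of the stabiliser of $\Phi$. The one issue you flag --- that the eigenspace decomposition over $\bar{\Q}$ really is the base change of the analytic one --- is handled exactly as you suggest, since $\iota(\mathcal{O})\subset\End_k(A)$ acts $k$-linearly on $\mathrm{Lie}(A/k)$ and the characteristic polynomial of $\iota(a)$ has coefficients in $k$ with roots $\{\phi(a)\}_{\phi\in\Phi}$. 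You could shorten the endgame considerably: once you know that $\iota(a)$ acts on the $g$-dimensional $k$-vector space $\mathrm{Lie}(A/k)$ with eigenvalues $\{\phi(a)\}_{\phi\in\Phi}$, its trace is simultaneously an element of $k$ and equal to $T_\Phi(a)$, so $T_\Phi(a)\in k$ for all $a$ and hence $K'=\Q\bigl(T_\Phi(a):a\in K\bigr)\subset k$ directly, with no need for the Galois-stability step or the fixed-field characterisation of $K'$ (where your appeal to elementary symmetric functions, while workable via power sums and a primitive element, is more cleanly replaced by linear independence of characters).
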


\begin{lemma}[\cite{shta1961}, Section II.8.4 Example (1)]\label{lem:abelian_ext_reflex_field} If $(K,\Phi)$ is a primitive CM field that is an abelian extension of $\Q$, then $K$ is its own reflex field. Furthermore, the reflex type $\Phi'$ of a type $\Phi$ is then given by $\Phi' = \Phi^{-1}$.
\end{lemma}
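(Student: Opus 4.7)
The plan is to unwind the definitions of reflex field and reflex type, exploiting the fact that when $K/\Q$ is Galois we may take the Galois closure $L$ to be $K$ itself. Since $K$ is abelian over $\Q$, every embedding $K\to\C$ lands in the image $K\subset\C$, so each $\phi_i$ may be identified with an element of $G=\Gal(K/\Q)$, and $\Phi_L$ coincides with $\Phi$ as a subset of $G$. Write $c\in G$ for complex conjugation; the CM-type condition then reads $\Phi\sqcup c\Phi=G$.

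Next I would describe the reflex field via Galois theory. An element $\sigma\in G$ fixes every type trace $T_\Phi(x)=\sum_{\phi\in\Phi}\phi(x)$ if and only if $\sum_{\phi\in\Phi}(\sigma\phi)(x)=\sum_{\phi\in\Phi}\phi(x)$ for all $x\in K$, which by linear independence of characters amounts to $\sigma\Phi=\Phi$ as subsets of $G$. Hence $K'$ is the fixed field of $H=\{\sigma\in G : \sigma\Phi=\Phi\}$. Because $G$ is abelian, $\sigma\Phi=\Phi$ is equivalent to $\Phi\sigma=\Phi$, so the primitivity criterion recalled just before the lemma statement forces $H=\{1\}$. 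Its fixed field is all of $L=K$, so $K'=K$.

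Finally, the reflex type $\Phi'$ is by definition the type on $K'$ whose lift to $L$ equals $\Phi_L^{-1}$; since $K'=L=K$, this lift is trivial and we obtain $\Phi'=\Phi^{-1}$. The only verification left is that $\Phi^{-1}$ really is a CM type on $K$: if $\phi_1^{-1}=c\phi_2^{-1}$ for some $\phi_1,\phi_2\in\Phi$, then $c^2=1$ together with commutativity of $G$ gives $\phi_2=c\phi_1=\overline{\phi_1}$, contradicting $\Phi\cap c\Phi=\emptyset$.

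The only delicate point throughout is matching conventions: the stabiliser arising naturally from type traces is $\{\sigma : \sigma\Phi=\Phi\}$, whereas the primitivity criterion in the paper is phrased as $\{\gamma : \Phi\gamma=\Phi\}$, and similarly the reflex type is defined via the inverse of the lifted type. Both reconciliations are automatic because $G$ is abelian, so no real obstacle appears beyond careful bookkeeping.
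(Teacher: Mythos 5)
Your proof is correct. Note that the paper does not actually prove this lemma --- it is stated with a citation to Shimura--Taniyama (Section II.8.4, Example (1)) and no proof is given --- so there is no in-paper argument to compare against; your Galois-theoretic derivation (identify $\Hom(K,\C)$ with $G=\Gal(K/\Q)$, compute the reflex field as the fixed field of the stabiliser $H=\{\sigma: \sigma\Phi=\Phi\}$ via linear independence of characters, use primitivity and commutativity to get $H=\{1\}$, then read off $\Phi'=\Phi^{-1}$ from the definition with $L=K=K'$) is exactly the standard argument the cited reference supplies, and your closing check that $\Phi^{-1}$ is again a CM type is a worthwhile detail that is often omitted.
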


Let now $A$ be an abelian variety with CM by the ring of integers $\mathcal{O}_K$ of a CM field $K$. For an ideal $\mathfrak{a}$ of $\mathcal{O}_K$ let $A/\mathfrak{a}$ be the quotient of $A$ by the finite flat group scheme
\begin{equation*}
A[\mathfrak{a}] = \bigcap_{a \in \mathfrak{a}} A[a],
\end{equation*}
where $A[a] = \ker a$ for $a \in \End(A)$. One can show that there exists an isogeny $\lambda_\mathfrak{a}: A \to B$, where $B = A/\mathfrak{a}$, and that the kernel of $\lambda_\mathfrak{a}$ is given by $\ker \lambda_\mathfrak{a} = A[\mathfrak{a}]$, see \citet[Section II.7]{shta1961}. We call such an isogeny $\lambda_\mathfrak{a}$ an \emph{$\mathfrak{a}$-multiplication}. If $\mathfrak{a} = \mathfrak{a}_1 \mathfrak{a}_2$ for coprime ideals $\mathfrak{a}_1$ and $\mathfrak{a}_2$, then $A[\mathfrak{a}] = A[\mathfrak{a}_1] \oplus A[\mathfrak{a}_2]$.

\subsection{Explicit Construction of CM Abelian Varieties}\label{sec:cm_explicit_construction}
\citet[Chapter II.6]{shta1961} describe an explicit construction of abelian varieties with CM by an order $\mathcal{O}$ of a chosen CM field $K$. The following exposition is essentially proven as \citet[Theorem 3]{vanwamelen1999}, and considers principal abelian varieties.

Over the complex numbers $\C$, every $g$-dimensional abelian variety can be written as a quotient of the form $\C^g/\Lambda$, for some lattice $\Lambda \subset \C$. Fixing a CM field $K$ of degree 2g, let $\Phi$ be a CM type on $K$, and let $\mathcal{O}_K$ be the ring of integers of $K$. Let $\mathfrak{D}_{K/\Q}$ be the different of $K$, and choose some fractional $\mathcal{O}_K$-ideal $\mathfrak{a}$. If there exists some $\xi \in K$ such that the following holds:
\begin{enumerate}
\item $\xi$ is totally imaginary with $\Im(\phi(\xi)) > 0$ for all embeddings $\phi \in \Phi$,
\item $\xi\mathcal{O}_K = (\mathfrak{a}\overline{\mathfrak{a}}\mathfrak{D}_{K/\Q})^{-1}$,
\end{enumerate}
then for all $x,y \in K$ the map $(\Phi(x),\Phi(y)) \mapsto \Tr_{K/\Q}(\xi\overline{x}y)$ extends to a unique \emph{Riemann form} $E:\C^g \times \C^g \to \R$. The Riemann form $E$ then determines a \emph{principal polarisation} on the complex abelian variety $A = \C^g/\Phi(\mathfrak{a})$ such that $A$ has CM by $\mathcal{O}_K$. Two such principally polarised abelian varieties $(\C^g/\Phi(\mathfrak{a}),\xi)$ and $(\C^g/\Phi(\mathfrak{b}),\zeta)$ are isomorphic if there exists some $\gamma \in K^*$ such that $\gamma\mathfrak{a} = \mathfrak{b}$ and $\xi = \gamma\overline{\gamma}\zeta$.

\subsection{Reduction of CM Abelian Varieties}\label{sec:reduction_cm_abvar}
Consider an abelian variety $A$ over some number field $k$. If $\mathfrak{P}$ is a prime of $k$, denote the reduction of $A \mod \mathfrak{P}$ by $\tilde{A}$. We will use the ``reduction $A \mod \mathfrak{P}$'' as shorthand for $A \times \Spec \mathcal{O}_K/\mathfrak{P}$, the base change of $A$ to the residue field of $\mathfrak{P}$.
General reduction theory of abelian schemes shows that for primes $\mathfrak{P}$ of good reduction, we have a degree preserving injection of endomorphism rings
\begin{equation*}
\End(A) \hookrightarrow \End(\tilde{A}).
\end{equation*}
In general this is not a bijection, as one can immediately see from the example of supersingular reduction of CM elliptic curves: Such a curve $E$ over a number field has endomorphism ring an order $\mathcal{O}$ in an imaginary quadratic field, whereas for supersingular reduction the resulting elliptic curve $\tilde{E}$ over $\overline{\F}_p$ has endomorphism ring a maximal order of the definite quaternion algebra ramified at $p$. Hence $\End(\tilde{E})$ is strictly larger than $\End(E)$.

If $\mathfrak{P}$ is a prime of good reduction for $A$, then for an isogeny $\lambda:A \to B$ we find the reduced isogeny $\tilde{\lambda}: \tilde{A} \to \tilde{B}$. We can now ask what happens to the reduction of an $\mathfrak{a}$-multiplication. For an important special case that we will use, consider the following data:

\begin{itemize}
\item $(A,\iota)$ is an abelian variety defined over a number field $k$ with CM by $(K,\Phi)$, and is principal.
\item $(K',\Phi')$ is the reflex field and type of $(K,\Phi)$. By \Fref{thm:reflex_field_of_definition} we have $K' \subset k$.
\item $\mathfrak{P}$ is a prime of $k$ at which $A$ has good reduction, $\mathfrak{p} = \mathfrak{P} \cap \mathcal{O}_K$ is the prime of $K'$ dividing $\mathfrak{P}$, and $p = \mathfrak{p} \cap \Z$ is the prime of $\Q$ dividing $\mathfrak{p}$. Denote the reduction of $A \mod \mathfrak{P}$ by $(\tilde{A},\tilde{\iota})$.
\item As in \Fref{sec:complex_mult}, $L$ is a Galois extension of $\Q$ containing $K$. Let $\mathfrak{P}_L$ be an extension of $\mathfrak{P}$ to $L$, $\sigma = \sigma_{\mathfrak{P}_L}$ a Frobenius automorphism in $\Gal(L/K')$ corresponding to $\mathfrak{P}_L$.
\end{itemize}

\begin{theorem}\label{thm:shimura_taniyama_formula}
Let $(A,\iota)$ be an abelian variety along with the data as above. Assume that $\End(A) \cong \mathcal{O}_K$, and that $p$ is unramified in $K$.
\begin{enumerate}
\item The ideal $\mathfrak{a} = N_{\Phi'}(\mathfrak{p})$ induces the $\mathfrak{a}$-multiplication $\lambda : (A,\iota) \to (A^\sigma, \iota^\sigma)$ such that $\tilde{\lambda} = \Fr_\mathfrak{p}: \tilde{A} \to \tilde{A}^{(N(\mathfrak{p}))}$ is the $N(\mathfrak{p})$-power Frobenius homomorphism.
\item There exists and element $\pi_\mathfrak{P} \in \mathcal{O}_K$ such that $\tilde{\iota}(\pi_\mathfrak{P}) = \Fr_\mathfrak{P}$ is the $N(\mathfrak{P})$-power Frobenius endomorphism in $\End(\tilde{A})$. Then the ideal $(\pi_\mathfrak{P})$ has the factorisation $(\pi_\mathfrak{P}) = N_{\Phi'}(N_{k/K'}(\mathfrak{P}))$.
\end{enumerate}
\end{theorem}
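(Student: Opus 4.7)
The plan is to derive both parts from the Main Theorem of Complex Multiplication (the Shimura--Taniyama reciprocity), which describes the action of $\Gal(\overline{\Q}/K')$ on CM abelian varieties in terms of ideals in $K$ produced by the reflex norm, and then translate the outcome into a statement about reduction modulo $\mathfrak{P}$. The two standing hypotheses play complementary roles: $\End(A)\cong\mathcal{O}_K$ guarantees that each isogeny corresponds to a unique ideal of $\mathcal{O}_K$, while $p$ being unramified in $K$ removes local corrections from the different and yields the clean form of the reflex-norm formula.

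For part (1), I would first observe that since $\sigma$ fixes $K'$, the Galois twist $(A^\sigma,\iota^\sigma)$ again has CM by $(K,\Phi)$ with the same reflex data. The Main Theorem of CM (as in \citet[\S II.8]{shta1961} or \citet[Ch.~7]{lang1983}) then produces a canonical $\mathfrak{a}$-multiplication $\lambda : A \to A^\sigma$, and its unramified form identifies the kernel ideal as $\mathfrak{a}=N_{\Phi'}(\mathfrak{p})$. To compute the reduction, I use that $\sigma$ is a Frobenius at $\mathfrak{P}_L$ over $\mathfrak{p}$, so its action on $\mathcal{O}_L/\mathfrak{P}_L$ is the $N(\mathfrak{p})$-power map; therefore the reduction $\widetilde{A^\sigma}$ is canonically $\tilde{A}^{(N(\mathfrak{p}))}$. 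Reducing $\lambda$ and invoking the degree-preserving injection $\End(A)\hookrightarrow\End(\tilde{A})$ then forces $\tilde{\lambda}$ to agree with the relative $N(\mathfrak{p})$-Frobenius $\Fr_\mathfrak{p}$.

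For part (2), write $f=f(\mathfrak{P}/\mathfrak{p})$, so that $\sigma^f$ restricts to the Frobenius of $k$ at $\mathfrak{P}$ and $N(\mathfrak{P})=N(\mathfrak{p})^f$. Iterating part (1), I form the chain
\begin{equation*}
A \xrightarrow{\lambda} A^\sigma \xrightarrow{\lambda^\sigma} A^{\sigma^2} \to \cdots \to A^{\sigma^f} = A,
\end{equation*}
where the last identification uses that $A$ is defined over $k$ and $\sigma^f|_k=\id$. The composition is an endomorphism $\tilde\iota(\pi_\mathfrak{P})$ whose reduction is the $f$-fold iterate of $\Fr_\mathfrak{p}$, namely $\Fr_\mathfrak{P}$. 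Taking ideals, multiplicativity of $N_{\Phi'}$ together with the identity $N_{k/K'}(\mathfrak{P})=\mathfrak{p}^f$ in the unramified case gives $(\pi_\mathfrak{P}) = N_{\Phi'}(\mathfrak{p}^f) = N_{\Phi'}(N_{k/K'}(\mathfrak{P}))$, as required.

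The main obstacle is the Main Theorem of CM invoked in part (1): its proof requires a delicate $\ell$-adic analysis of $\lambda$ for all primes $\ell$ (including $\ell=p$, where the unramifiedness hypothesis is essential) together with the identification of global Artin reciprocity on $K'$ with the Galois action on the Tate modules of $A$. I would cite this rather than reprove it; granted this input, both parts reduce to the compatibility checks above. A subsidiary bookkeeping point to verify is that the $\sigma$-conjugates of $\mathfrak{a}$ along the chain multiply to $\mathfrak{a}^f$; this follows because $\sigma$ fixes $\mathfrak{p}\subset\mathcal{O}_{K'}$ and the lifted type $\Phi'_L$ is stable under the decomposition subgroup at $\mathfrak{P}_L$.
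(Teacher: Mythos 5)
Your proposal is essentially consistent with what the paper does, because the paper does not prove this theorem at all: its ``proof'' is a citation to \citet[Section III.13, Theorem 1]{shta1961} and to \citet[Section 3.3]{lang1983} (Lemmas 3.1, 3.3 and Theorem 3.4), plus a remark correcting a typo in Lang. Your sketch is a faithful outline of how those cited results are organised, and your derivation of part (2) from part (1) by composing the chain $A \to A^\sigma \to \cdots \to A^{\sigma^f}=A$ and using $N_{k/K'}(\mathfrak{P})=\mathfrak{p}^f$ is correct; note that the composite is automatically an $\mathfrak{a}^f$-multiplication because each $\lambda^{\sigma^i}$ is an $\mathfrak{a}$-multiplication with respect to the twisted structure $\iota^{\sigma^i}$, so the ``bookkeeping point'' about $\sigma$-conjugates of $\mathfrak{a}$ is not really needed. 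The one place where your sketch mislocates the substance is the claim that reducing $\lambda$ and invoking the injection $\End(A)\hookrightarrow\End(\tilde{A})$ ``forces'' $\tilde{\lambda}=\Fr_{\mathfrak{p}}$: two isogenies $\tilde{A}\to\tilde{A}^{(N(\mathfrak{p}))}$ of the same degree need not coincide, and the identification of $\tilde{\lambda}$ with Frobenius is precisely the Shimura--Taniyama congruence relation, proved by analysing the action on tangent spaces and differentials at the primes of $K$ above $p$ (this is where the CM type and the unramifiedness hypothesis actually enter), not a formal compatibility check downstream of the Main Theorem. Since you explicitly cite the classical input rather than reprove it, this is acceptable provided you understand that the congruence relation is part of what is being cited rather than something you derive from it.
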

\begin{proof}
This is \citet[Section III.13 Theorem 1]{shta1961}. It can also be found in \citet[Section 3.3]{lang1983}: 1. is Lemma 3.1 and Theorem 3.4, 2. is Lemma 3.3. Note that there is a typo in the statement of Lemma 3.3, the image of $\pi_\mathfrak{P}$ is rather the $N(\mathfrak{P})$-power Frobenius endomorphism $\Fr_{\mathfrak{P}}$ of $\tilde{A}$.
\end{proof}

\begin{remark}
Recall that for an abelian scheme $A$ over $\F_q$ (where $q = p^r$ is a prime power for some prime $p$), we denote by $A^{(p^s)} = A \times_{\Fr_{p^s}} \Spec \F_q$ the \emph{$p^s$-Frobenius twist} of $A$ as the base change by the $p^s$-power Frobenius morphism. We have $A^{(q)} = A$, but $A^{(p^s)}$ for some $s \neq r$ is only isogenous to $A$. Hence, $\Fr_q \in \End(A)$ is an endomorphism, but there are also the homomorphisms $\Fr_{p^s} : A \to A^{(p^s)}$ as isogenies in $\Hom(A,A^{(p^s)})$.
\end{remark}

By \citet{yu2004}, the Dieudonné module (and hence the $p$-rank and $a$-number) of the reduced abelian variety $\tilde{A}$ is uniquely determined by the CM type $\Phi$ and the splitting type of the prime $p$ in $K$. Say $p$ is unramified and decomposes as $p = \prod_{i=1}^n \mathfrak{p}_i$ with inertia degrees $f_i$.  We have $A[p] = \bigoplus_{i=1}^n A[\mathfrak{p}_i]$. Denote the Dieudonné module of $\tilde{A}[p]$ over $\overline{\F}_p$ by $\mathcal{D}$. Since $\mathcal{O}_K \otimes \Z_p = \bigoplus_{i=1}^n \mathcal{O}_{K,\mathfrak{p}_i}$, we have that $\mathcal{D}$ also decomposes as a direct sum $\mathcal{D} = \bigoplus_{i=1}^n \mathcal{D}(\mathfrak{p}_i)$, where $\mathcal{D}(\mathfrak{p}_i)$ has dimension $f_i$. For the $a$-number we have that $a(\mathcal{D}) = \sum_{i=1}^n a(\mathcal{D}(\mathfrak{p}_i))$. Recall that the Dieudonné module of a local-local group scheme has strictly positive $a$-number. Note that it is also possible to study these decompositions if $p$ ramifies, see also \citet{goren2012}.

\subsection{Reduction of CM Abelian Surfaces}\label{sec:reduction_cm_surfaces}
Let us now consider the situation where we reduce a CM abelian surface over a number field $k$ at a prime $\mathfrak{P}$ of $k$. We expect a similar result as in the elliptic curve case, where we either find an ordinary or a supersingular elliptic curve $E$ over a finite field, depending on what happens to the $p$-torsion of the reduced curve. Let $E$ be an elliptic curve defined over some number field $k$ with CM by the ring of integers $\mathcal{O}_K$ of an imaginary quadratic field $K$, and let $\mathfrak{P}$ be a prime of $k$. By a theorem of \citet{deuring1941} (see also \citet[Chaper 13, Theorem 12]{lang1987} for a modern statement), we know that the type of the reduction of $E \mod \mathfrak{P}$, denoted by $\tilde{E}$, depends only on the splitting behaviour of $p = \mathfrak{P} \cap \Z$ in $\mathcal{O}_K$: If $p$ splits then $\tilde{E}$ is ordinary, and if $p$ is ramified or inert then $\tilde{E}$ is supersingular.

The following theorem gives the first half of the answer to the problem of generalising Deuring's result to the reduction of CM abelian surfaces.

\begin{theorem}[\cite{goren1997}, Theorem 1]\label{thm:genus_2_reduction_types}
Let $K/\Q$ be a cyclic quartic CM field. Let $A/\Q^\text{alg}$ be an abelian surface. Assume that $A$ has complex multiplication by $\mathcal{O}_K$. Let $\mathfrak{P}$ be a prime of $\Q^\text{alg}$, $\mathfrak{p}_1 = \mathfrak{P} \cap \mathcal{O}_K$, $p = \mathfrak{p}_1 \cap \Z$. Assume that $p$ is unramified in $K$. Then $\tilde{A}$, the reduction of $A \mod \mathfrak{P}$, and $f(\tilde{A})$, $a(\tilde{A})$ are determined by the decomposition of $p$ in $\mathcal{O}_K$ as follows:
\begin{enumerate}
\item If $p = \mathfrak{p}_1 \mathfrak{p}_2 \mathfrak{p}_3 \mathfrak{p}_4$, then $\tilde{A}$ is ordinary and simple, $f(\tilde{A}) = 2$, $a(\tilde{A}) = 0$.
\item If $p = \mathfrak{p}_1 \mathfrak{p}_2$, then $\tilde{A}$ is superspecial, $f(\tilde{A}) = 0$, $a(\tilde{A}) = 2$.
\item If $p = \mathfrak{p}_1$, then $\tilde{A}$ is supersingular non-superspecial, $f(\tilde{A}) = 0$, $a(\tilde{A}) = 1$.
\end{enumerate}
\end{theorem}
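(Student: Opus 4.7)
The plan is to combine Yu's decomposition $\tilde{A}[p] = \bigoplus_i \tilde{A}[\mathfrak{p}_i]$ with the Shimura--Taniyama formula (\Fref{thm:shimura_taniyama_formula}), analysing each local piece according to how $\Phi$ interacts with the embeddings above $\mathfrak{p}_i$. Since $K$ is cyclic quartic CM, by \Fref{lem:abelian_ext_reflex_field} it equals its own reflex field and $\Phi' = \Phi^{-1}$. Writing $\Gal(K/\Q) = \langle \sigma \rangle$ with $c = \sigma^2$ being complex conjugation, the three unramified splitting behaviours correspond to the three subgroups of $\Z/4\Z$: the Frobenius at $\mathfrak{p}_i$ generates the decomposition group, which is trivial in case~1, equal to $\langle c \rangle$ in case~2, and the whole of $\Gal(K/\Q)$ in case~3.

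In case~1, each $\tilde{A}[\mathfrak{p}_i]$ has Dieudonné module of $k$-dimension $1$, hence is isomorphic to $\Z/p\Z$ or $\mu_p$. The four primes split into two $c$-orbits, and $\Phi$ picks exactly one embedding from each conjugate pair, forcing two factors to be étale and two multiplicative, so $\tilde{A}[p] \cong L^2$ is ordinary; simplicity of $\tilde{A}$ follows from primitivity of $\Phi$, which is automatic for cyclic quartic since no nontrivial element of $\Gal(K/\Q)$ can stabilise a CM type of size $2$ inside four embeddings forming a single $\sigma$-orbit. In case~2, Frobenius at $\mathfrak{p}_i$ equals $c$, so $\bar{\mathfrak{p}_i} = \mathfrak{p}_i$; the two embeddings of $K$ above $\mathfrak{p}_i$ are then conjugate and $\Phi$ selects exactly one of them, forcing each $\tilde{A}[\mathfrak{p}_i]$ to be a local-local $BT_1$ scheme of rank $p^2$, which by the classification in \Fref{ex:genus_2_p_torsion} must be $I_{1,1}$. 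Therefore $\tilde{A}[p] \cong I_{1,1}^2$ and $\tilde{A}$ is superspecial.

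Case~3 is the hardest. Again $\bar{\mathfrak{p}} = \mathfrak{p}$, so $\tilde{A}[\mathfrak{p}]$ is a $4$-dimensional local-local Dieudonné module. Supersingularity is immediate from \Fref{thm:shimura_taniyama_formula}: $(\pi_\mathfrak{P}) = N_{\Phi'}(\mathfrak{p}) = p^2 \mathcal{O}_K$, and $|\phi(\pi_\mathfrak{P})| = p^2$ in every embedding then forces $\pi_\mathfrak{P} = p^2 u$ with $u \in \mathcal{O}_K^*$ of absolute value $1$ in all embeddings, hence a root of unity by Dirichlet, giving Newton slopes all $1/2$. The substantive obstacle is separating $I_{2,1}$ from $I_{1,1}^2$, i.e.\ showing $a(\tilde{A}) = 1$ rather than $2$. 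Here I would exploit the semilinear $\mathcal{O}_K$-structure: $\mathcal{D}(\mathfrak{p})$ is a free rank-$1$ module over $\mathcal{O}_K/\mathfrak{p} \otimes_{\F_p} k \cong \prod_{i=0}^{3} k$, decomposing as $M = \bigoplus_{i=0}^{3} M_i$ into $1$-dimensional eigenspaces indexed by the four embeddings of $\F_{p^4}$ into $k$, cyclically permuted by the $\sigma$-semilinear Frobenius $F$. Each $F|_{M_i}$ is either zero or an isomorphism, with the pattern determined by whether the corresponding embedding lies in $\Phi$. Crucially, in cyclic quartic the conjugate pairs of embeddings are $\{\phi^{(0)}, \phi^{(2)}\}$ and $\{\phi^{(1)}, \phi^{(3)}\}$, so any CM type $\Phi$ yields the "consecutive" pattern $1100$ up to cyclic shift, whereas $I_{1,1}^2$ would require the "alternating" pattern $1010$. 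A direct computation of $\dim_k M/(FM + VM)$ for the pattern $1100$ yields $a$-number exactly $1$, identifying $\tilde{A}[p] \cong I_{2,1}$.

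The main obstacle, therefore, is the $I_{2,1}$ vs.\ $I_{1,1}^2$ distinction in case~3: it requires more than the Newton polygon and relies essentially on the cyclicity of $K/\Q$ forcing the consecutive pattern $1100$ on the semilinear Dieudonné module. Without cyclicity (for example for a biquadratic CM field) the alternating pattern $1010$ is realisable and superspecial reduction can occur in the inert case, so the analogous statement fails.
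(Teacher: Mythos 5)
Your proposal is essentially correct, but note that the paper does not actually prove this statement: it is quoted verbatim from Goren (Theorem 1 of \cite{goren1997}) and used as a black box. So the fair comparison is with Goren's original argument and with the paper's own proofs of the higher-dimensional analogues, \Fref{thm:genus_3_reduction_types} and \Fref{thm:genus_g_reduction_types}. Your treatment of cases 1 and 2 matches both: Yu's decomposition $\tilde{A}[p]=\bigoplus_i \tilde{A}[\mathfrak{p}_i]$ plus Shimura--Taniyama, with each rank-$p^2$ local-local factor contributing $1$ to the $a$-number. Where you genuinely diverge is the inert case. The paper's method for the corresponding hard cases is Ekedahl's criterion (\Fref{lem:ssing_product_criterion}): compute the Frobenius element $\sigma$ in the decomposition group and check whether $\sigma\overline{\Phi}=\Phi$ to decide superspecialness. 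You instead grade the Dieudonné module $\mathcal{D}(\mathfrak{p})$ by the four embeddings of $\F_{p^4}$ into $k$, observe that complex conjugation $=\tau^2$ forces the vanishing pattern of $F$ to be consecutive ($1100$) rather than alternating ($1010$), and compute $a=\dim_k M/(FM+VM)=1$ directly as the number of descents in the cyclic word. Both are valid; the trade-off is that Ekedahl's criterion is cheaper but only answers ``superspecial or not'' (which suffices in genus 2, where $f=0$ leaves only $a\in\{1,2\}$, but in genus 3 cannot by itself separate $a=1$ from $a=2$), whereas your cyclic-word computation yields the exact $a$-number and is the argument that scales to the finer stratification. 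Your identification of case 3 as the genuine obstacle, and of cyclicity as the structural reason the alternating pattern cannot occur, is exactly right and is the content of Goren's proof.

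Two small inaccuracies, neither fatal. First, $(\pi_\mathfrak{P})=N_{\Phi'}(N_{k/K'}(\mathfrak{P}))$ is in general a \emph{power} of $N_{\Phi'}(\mathfrak{p}_1)$, so in case 3 you get $(\pi_\mathfrak{P})=(p^{2m})$ for some $m\geq 1$; the slope computation is unaffected. Second, your closing remark that the alternating pattern is realised for a biquadratic CM field in the inert case is vacuous: a biquadratic field has non-cyclic Galois group, hence no inert primes at all. The correct contrast is with non-Galois quartic CM fields (Goren's Theorem 2), where the dihedral closure permits decomposition behaviour that produces the non-ordinary type $L\oplus I_{1,1}$ absent from the cyclic case.
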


We can see that in the cyclic quartic CM field case no possible reduction of an abelian surface induces a $p$-torsion of non-ordinary type, see \Fref{ex:genus_2_p_torsion}. This type only occurs in the second half of the answer, but we will not restate it here since it involves non-Galois quartic CM fields which are not relevant to the rest of this work. We urge the interested reader to consult \citet[Theorem 2]{goren1997} instead.

\section{Cyclic CM Types}\label{sec:cyclic_cm_types}
Let $g\geq 1$. Note that a CM type $\Phi \subset \{\phi_1,\dots,\phi_g,\overline{\phi}_1,\dots,\overline{\phi}_g\}$ can be identified with a binary string $v_\Phi$ of length $g$ such that $v_{\Phi,i} = 0$ if $\Phi_i = \phi_i$ and $v_{\Phi,i} = 1$ if $\Phi_i = \overline{\phi}_i$. Alternatively, we can also consider an extended representation of  $\Phi$ as a binary string of length $2g$ by concatenating $v_\Phi$ and its complement. This does not give any extra information but will be useful. 

Let $B(g)$ be the set of binary strings of length $2g$ with the additional property that entries that are at distance $g$ from each other must be different. For example, $101010\in B(3)$, but $110010\notin B(3)$. As we said, elements in $B(g)$ can be thought of as representing CM types.

In this section we want to study CM types of cyclic CM fields. The cyclic group of order $2g$ acts naturally via cyclic shifts on the set of binary strings of length $2g$. It is easy to see that $B(g)$ is preserved (setwise) by this action. This induces an equivalence relation on $B(g)$ (with the equivalence classes given by the orbits of the group). Let $E(g)$ be the set of equivalence classes. 

The period $k$ of a binary string $b$ is the smallest positive integer such that $b$ is fixed under a cyclic shift of $k$. For example, $1010$ has period $2$, while $1110$ has period $4$. Clearly, the period is preserved under cyclic shifts, so the period of elements of $E(g)$ is also well-defined.

Let $P(k)$ be the number of elements of $B(g)$ of period $k$. Note that, as the notation suggests, this does not actually depend on $g$, as will be shown in the proof of~\Fref{thm:cm_types_count}. We now determine formulas for $P(k)$ and $|E(g)|$. Denote Euler's totient function by $\varphi(x)$, and the Möbius function by $\mu(x)$. 

\begin{theorem}\label{thm:cm_types_count}
If $k,g\geq 1$, then
\begin{align*}
P(k)&=\sum_{\substack{x \mid k \\ x \mathrm{~odd }}} \mu(x)2^{k/2x}\\
|E(g)|&=\frac{1}{2g}\sum_{\substack{d \mid g \\ d \mathrm{~odd }}} \varphi(d)2^{g/d}.
\end{align*}
\end{theorem}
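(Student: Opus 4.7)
The plan is to first compute $f(k')$, the number of elements of $B(g)$ fixed by the cyclic shift of $k'$ positions, and then derive $P(k)$ by Möbius inversion and $|E(g)|$ by the Cauchy--Frobenius (Burnside) lemma applied to the cyclic action on $B(g)$.

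First I would show, for $k' \mid 2g$, that $f(k') = 2^{k'/2}$ when $k' \nmid g$, and $f(k') = 0$ otherwise. A string $b \in B(g)$ fixed by the shift of $k'$ has period dividing $k'$ and hence is determined by its restriction to $\mathbb{Z}/k'\mathbb{Z}$. The defining constraint $b_i \neq b_{i+g}$ then descends to $b_j \neq b_{j+r}$ on $\mathbb{Z}/k'\mathbb{Z}$, where $r \equiv g \pmod{k'}$. If $k' \mid g$ then $r = 0$ and the constraint is unsatisfiable. Otherwise, writing $g = 2^a m$ and $k' = 2^{a+1} m'$ with $m, m'$ odd and $m' \mid m$, one computes $g = (m/m')(k'/2) \equiv k'/2 \pmod{k'}$, since $m/m'$ is odd. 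Thus the constraint becomes $b_j \neq b_{j + k'/2}$; the involution $j \mapsto j + k'/2$ has $k'/2$ orbits of size $2$, each contributing two independent bit assignments, giving $f(k') = 2^{k'/2}$.

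For $P(k)$, Möbius inversion on the lattice of divisors of $k$ yields $P(k) = \sum_{d \mid k} \mu(k/d) f(d)$. The nonzero terms require $\nu_2(d) = \nu_2(k)$, equivalently $x := k/d$ is an odd divisor of $k$. Substituting and using $f(d) = 2^{k/2x}$ gives the first formula, which visibly depends only on $k$ (as long as some compatible $g$ exists; otherwise $P(k) = 0$ trivially). For $|E(g)|$, Cauchy--Frobenius yields
\[|E(g)| = \frac{1}{2g} \sum_{k' = 0}^{2g - 1} f\bigl(\gcd(k',2g)\bigr) = \frac{1}{2g} \sum_{d \mid 2g} \varphi(2g/d)\, f(d),\]
since $\varphi(2g/d)$ counts $k' \in \{0,\dots,2g-1\}$ with $\gcd(k', 2g) = d$. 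Only those $d$ with $2g/d$ odd survive; setting $e = 2g/d$ and observing that odd divisors of $2g$ coincide with odd divisors of $g$ recovers the second formula after renaming $e$ to $d$.

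The main obstacle is the explicit evaluation of $f(k')$, specifically the arithmetic identity $g \equiv k'/2 \pmod{k'}$ whenever $k' \mid 2g$ and $k' \nmid g$; once this is in hand, the two formulas follow by standard Möbius/Burnside bookkeeping, with the odd-divisor conditions both tracing back to the same $2$-adic valuation constraint on the surviving terms.
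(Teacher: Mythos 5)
Your argument is correct. The core count is the same as the paper's: both proofs reduce to showing that the strings in $B(g)$ whose period divides $k'$ number $2^{k'/2}$ when $2g/k'$ is odd and $0$ otherwise --- the paper phrases this as ``an element of period dividing $k$ is determined by its first $k/2$ entries'', while you phrase it as the congruence $g \equiv k'/2 \pmod{k'}$ turning the constraint on $\mathbb{Z}/k'\mathbb{Z}$ into a perfect matching; your $2$-adic verification of that congruence is sound, and the formula for $P(k)$ then follows by essentially the same M\"obius inversion (you invert over all divisors and let $f(d)=0$ for $d\mid g$ kill the unwanted terms, the paper inverts over the odd-quotient divisor poset). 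Where you genuinely diverge is the second formula: the paper sums $P(k)/k$ over admissible periods and rearranges the resulting double sum, whereas you apply the Cauchy--Frobenius lemma directly to the fixed-point counts $f(\gcd(k',2g))$, which bypasses the explicit formula for $P(k)$ and the double-sum manipulation entirely. Both routes are standard; yours is arguably a little cleaner for $|E(g)|$, since the odd-divisor condition falls out of the single fact that $f(d)=0$ for $d\mid g$ rather than being tracked through the change of summation variable.
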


\begin{proof}
First, note that there is a one-to-one correspondence between $B(g)$ and the set of binary strings of length $g$, given by taking the first $g$ bits of a string. (In the opposite direction, simply append the complement of the string of length $g$.) It follows that $|B(g)|=2^g$.

Let $b\in B(g)$ have period $k$. Clearly, $k$ must divide the length of $b$, that is $2g$. On the other hand, $k$ cannot divide $g$, as otherwise the second half of the string would be identical to the first half. Equivalently, $2g/k$ is odd.

Let $k$ be a divisor of $2g$ such that $2g/k$ is odd, and let $b$ be an element of $B(g)$ of period $k$. Clearly, $b$ is completely determined by its first $k$ entries. In fact, the first $k/2$ entries suffice to determine $b$: since $2g/k$ is odd, the first $k/2$ entries of the second half of $b$ must be equal to the second set of $k/2$ entries, which must then be the complement of the first $k/2$ entries.  Conversely, if one arbitrarily chooses the first $k/2$ entries of an element $b$ of $B(g)$, one can complete the rest of the entries using the procedure above to get an element of period dividing $k$. In particular, $P(k)$ does not depend on $g$, as claimed earlier.

Each of the $2^g$ elements of $B(g)$ has some period $k$ for $k$ as above, so
\begin{equation*}
|B(g)|=2^g =\sum_{\substack{k \mid 2g \\ \frac{2g}{k} \mathrm{odd }}} P(k).
\end{equation*}
Writing $m=2g$, we can rewrite this as
\begin{equation*}
2^{m/2} =\sum_{\substack{k \mid m \\ \frac{m}{k} \mathrm{odd }}} P(k).
\end{equation*}

By Möbius inversion, it follows that
\begin{equation*}
P(k)=\sum_{\substack{x \mid k \\ x \mathrm{~odd }}} \mu(x)2^{k/2x},
\end{equation*}
proving our first claim. If $b\in B(g)$ has period $k$, then its equivalence class also has size $k$. So
\begin{align*}
|E(g)| &=\sum_{\substack{k\mid 2g \\ \frac{2g}{k} \mathrm{odd }}}\frac{1}{k} P(k)\\
&=\sum_{\substack{k\mid 2g \\ \frac{2g}{k} \mathrm{odd }}}\frac{1}{k} \sum_{\substack{x\mid k \\ x \mathrm{~odd }}} \mu(x)2^{k/2x}.
\end{align*}
Writing $d=2gx/k$, this becomes
\begin{align*}
|E(g)| &=\frac{1}{2g}\sum_{\substack{d\mid 2g \\ d \mathrm{~odd }}}d2^{g/d} \sum_{x\mid d} \frac{\mu(x)}{x}\\
&=\frac{1}{2g}\sum_{\substack{d\mid g \\ d \mathrm{~odd }}}2^{g/d}\varphi(d).
\end{align*}
\end{proof}

Denote the subset of primitive equivalence classes by $E'(g) \subset E(g)$. Since the primitive CM types correspond to elements of $E(g)$ with period $2g$, \Fref{thm:cm_types_count} immediately implies the following corollary.
\begin{corollary}
If $g \geq 1$, then
\begin{equation*}
|E'(g)|=P(2g)/2g=\frac{1}{2g} \sum_{\substack{x\mid g \\ x \mathrm{~odd }}} \mu(x)2^{g/x}.
\end{equation*}
\end{corollary}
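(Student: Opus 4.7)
The plan is to read this off directly from \Fref{thm:cm_types_count} once the combinatorial dictionary is in place, so the proof should be very short. First I would identify primitive CM types with strings of maximal period. Recall from \Fref{sec:complex_mult} that a CM type on a cyclic CM field $K$ of degree $2g$ is primitive exactly when no non-trivial automorphism of $K$ fixes it; under the identification of $B(g)$ with CM types, the $\Gal(K/\Q)$-action corresponds to the cyclic shift action used to define $E(g)$, so primitive types are exactly those binary strings in $B(g)$ whose stabiliser under the shift is trivial, which is the same as saying they have period $2g$.

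Next I would use orbit-counting on strings of period $2g$. Since a string of period $2g$ has a trivial stabiliser in the cyclic group of order $2g$, its equivalence class has size exactly $2g$, so the number of primitive classes is
\begin{equation*}
|E'(g)| = \frac{P(2g)}{2g}.
\end{equation*}

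Finally I would plug $k=2g$ into the formula for $P(k)$ proved in \Fref{thm:cm_types_count}, obtaining
\begin{equation*}
P(2g) = \sum_{\substack{x \mid 2g \\ x\text{ odd}}} \mu(x)\, 2^{(2g)/(2x)} = \sum_{\substack{x \mid 2g \\ x\text{ odd}}} \mu(x)\, 2^{g/x}.
\end{equation*}
The only remaining manipulation is to note that an odd divisor $x$ of $2g$ is the same thing as an odd divisor of $g$ (since $\gcd(x,2)=1$ forces $x\mid g$), so the indexing set simplifies to $\{x \mid g : x\text{ odd}\}$, giving the claimed expression. The hardest step is really just the first identification, and even that is immediate from the discussion of normal CM fields in \Fref{sec:complex_mult}; there is no genuine obstacle here beyond making sure the two group actions (Galois on types versus cyclic shift on strings) are identified correctly.
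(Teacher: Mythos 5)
Your proposal is correct and matches the paper's reasoning: the paper states the corollary follows immediately from \Fref{thm:cm_types_count} via the observation that primitive types correspond to period-$2g$ strings, and your write-up simply spells out the same steps (orbit size $2g$, substitution $k=2g$, and the identification of odd divisors of $2g$ with odd divisors of $g$). No issues.
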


In particular, in the \emph{On-Line Encyclopedia of Integer Sequences} \cite{oeis2022}, $|E(g)|$ is given by the sequence A000016\footnote{https://oeis.org/A000016}, $|E'(g)|$ is given by A000048\footnote{https://oeis.org/A000048}, whereas $|E(g)|-|E'(g)|$ is given by A053734\footnote{https://oeis.org/A053734}. Here is a table of the first few values:

\begin{table}[h!]
\centering
\begin{tabular}{c|c|c|c|c|c|c|c|c|c|c|c|c|c|c}
$g$ & 1 & 2 & 3 & 4 & 5 & 6 & 7 & 8 & 9 & 10 & 11 & 12 & 13 & 14 \\
\hline
Total & 1 & 1 & 2 & 2 & 4 & 6 & 10 & 16 & 30 & 52 & 94 & 172 & 316 & 586 \\
Primitive & 1 & 1 & 1 & 2 & 3 & 5 & 9 & 16 & 28 & 51 & 93 & 170 & 315 & 585 \\
Imprimitive & 0 & 0 & 1 & 0 & 1 & 1 & 1 & 0 & 2 & 1 & 1 & 2 & 1 & 1
\end{tabular}
\caption{Number of equivalence classes of CM types for cyclic CM fields of degree $2g$.}\label{table:cyclic_cm_types}
\end{table}

Next we give representatives for the equivalence classes for some small values of $g$. Let $K$ be a cyclic CM field of degree $2g$. Denote the generator of $\Gal(K/\Q) = C_{2g}$ by $\tau$. Then complex conjugation corresponds to $\tau^g$.

\paragraph{$g = 2$:}
There is one primitive class with representative $\{1, \tau\}$.

\paragraph{$g = 3$:}
There is one primitive class with representative $\{1, \tau, \tau^2\}$, and one imprimitive class with representative $\{1, \tau^2, \tau^4\}$ fixed by $\tau^2$ (generating $C_3$).

\paragraph{$g = 4$:}
There are two primitive classes with representatives $\{1, \tau, \tau^2, \tau^3\}$ and $\{1, \tau, \tau^3, \tau^6\}$.

\paragraph{$g = 5$:}
There are three primitive classes with representatives
\begin{gather*}
\{1, \tau, \tau^2, \tau^3, \tau^4\}, \\
\{1, \tau, \tau^2, \tau^4, \tau^8\}, \\
\{1, \tau, \tau^3, \tau^4, \tau^7\}.
\end{gather*}
There is also one imprimitive class with representative $\{1, \tau^2, \tau^4, \tau^6, \tau^8\}$ fixed by $\tau^2$ (generating $C_5$).

\paragraph{$g = 6$:}
There are five primitive classes with representatives
\begin{gather*}
\{1, \tau, \tau^2, \tau^3, \tau^4, \tau^5 \}, \\
\{1, \tau, \tau^2, \tau^3, \tau^5, \tau^{10} \}, \\
\{1, \tau, \tau^2, \tau^4, \tau^5, \tau^9 \}, \\
\{1, \tau, \tau^2, \tau^5, \tau^9, \tau^{10} \}, \\
\{1, \tau, \tau^3, \tau^5, \tau^8, \tau^{10} \}.
\end{gather*}
There is also one imprimitive class with representative $\{1, \tau, \tau^4, \tau^5, \tau^8, \tau^9 \}$ fixed by $\tau^4$ (generating $C_3$).

\section{Reduction of Higher Dimensional CM Abelian Varieties}\label{sec:higher_dim_reduction}
In this section we consider the situation of 3-dimensional abelian varieties with CM by the ring of integers $\mathcal{O}_K$ of a cyclic sextic CM field $K$. We saw in \Fref{sec:cyclic_cm_types} that for a cyclic sextic CM field there exists a unique primitive CM type. We are interested in the reduction types in this case. Abelian varieties that have CM by the other unique but imprimitive CM type (induced from an imaginary quadratic subfield) can be analysed by reducing to lower dimension via an isogeny, and is thus already solved. Hence, we can safely concentrate on the primitive setting.

\begin{lemma}[Adapted from \cite{goren1997}, Local-Local Lemma]\label{lem:local_local_lemma}
Let $A$ be an abelian variety over some finite field of characteristic $p$. If the Frobenius morphism $F : A \to A^{(p)}$ satisfies $F^m = \alpha p^n$ for some positive integers $m \geq n$, and $\alpha \in \Aut(A)$, then $A[p]$ is a local-local group scheme.
\end{lemma}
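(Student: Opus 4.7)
The plan is to use the Dieudonné-theoretic characterisation of local-local group schemes: $A[p]$ is local-local if and only if both the relative Frobenius $F$ and the Verschiebung $V$ act nilpotently on its Dieudonné module, equivalently if neither $\Z/p\Z$ nor $\mu_p$ appears as a subquotient. The strategy is to deduce the nilpotence of $F|_{A[p]}$ directly from the hypothesis and the nilpotence of $V|_{A[p]}$ from the standard relation $VF=[p]_A$. Implicit in the hypothesis $F^m=\alpha p^n\in\End(A)$ with $\alpha\in\Aut(A)$ is that $A^{(p^m)}=A$, so $A$ is defined over $\F_{p^m}$ and $V^m$ may likewise be treated as an endomorphism of $A$.

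First I would dispatch the $F$-side: multiplication by $p$ vanishes on $A[p]$, hence from $F^m=\alpha p^n$ together with $n\geq 1$ one immediately gets $F^m|_{A[p]}=0$. So $F$ is nilpotent on the Dieudonné module of $A[p]$ and $A[p]$ has no étale part.

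For the $V$-side, I would iterate the identity $VF=[p]_A$ to obtain $V^m F^m=[p^m]_A$ (the Frobenius-twist bookkeeping is routine because $[p]$ commutes with every morphism of abelian schemes). Substituting $F^m=\alpha p^n$ and using that $\alpha$ is an automorphism gives $V^m=\alpha^{-1}p^{m-n}$. For this to vanish on $A[p]$ we need $m-n\geq 1$, i.e.\ the strict inequality $m>n$, which is not literally stated but follows from a short degree count: taking degrees on both sides of $F^m=\alpha p^n$ gives $p^{mg}=p^{2ng}$ (using $\deg F=p^g$, $\deg\alpha=1$, $\deg[p]=p^{2g}$), so necessarily $m=2n$ and in particular $m-n=n\geq 1$. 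Thus $V^m|_{A[p]}=0$ and $A[p]$ has no multiplicative part either.

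Combining both nilpotence statements, $A[p]$ is local-local, as claimed. The only delicate point is the degree argument upgrading the stated hypothesis $m\geq n$ to the strict inequality $m>n$ needed for the Verschiebung to vanish on $p$-torsion; the remaining manipulations are elementary consequences of the identity $VF=[p]$.
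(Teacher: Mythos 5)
Your proof is correct and follows essentially the same route as the paper's: iterate $VF = p$ to get $V^mF^m = p^m$, cancel the surjective $\alpha p^n$ to obtain $V^m = \alpha^{-1}p^{m-n}$, and conclude that both Frobenius and Verschiebung are nilpotent on $A[p]$. Your degree count showing $m = 2n$ (hence $m > n$) is a worthwhile refinement: the stated hypothesis $m \geq n$ alone would not force $V^m$ to vanish on $A[p]$ in the boundary case $m = n$, and your argument shows that case cannot occur.
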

\begin{proof}
Denote the \emph{Verschiebung} morphism by $V : A^{(p)} \to A$. Then $VF = p$ in $\End(A)$, and iterating $m$ times, yields $V^mF^m = p^m$. Using the assumption, we have $V^m\alpha p^n = p^m$ and hence $(V^m - p^{m-n}\alpha^{-1})\alpha p^n = 0$. Since $\alpha p^n$ is surjective, $V^m = \alpha^{-1}p^{m-n}$. Now both Frobenius and Verschiebung are nilpotent on $A[p]$ and thus $A[p]$ is a local-local group scheme.
\end{proof}

Consider the same setup as in \Fref{sec:reduction_cm_abvar}: Suppose we are given a principal abelian variety $A$ defined over a number field $k$ with CM by $(K,\Phi)$. Assume here that $k$ is Galois, and let $\mathfrak{P}$ be a prime of $k$ such that $A$ has good reduction at $\mathfrak{P}$. Let $\sigma$ be a Frobenius automorphism in $\Gal(k/K')$ corresponding to $\mathfrak{P}$. Let $\F_q$ be the residue field $\mathcal{O}_K/\mathfrak{P}$.

\begin{lemma}[\citet{ekedahl1987}, Section 2 Proposition 2.4]\label{lem:ssing_product_criterion}
The reduction of $A \mod \mathfrak{P}$ is isomorphic over $\overline{\F}_q$ to a product of supersingular elliptic curves if and only if $\sigma \overline{\Phi} = \Phi$.
\end{lemma}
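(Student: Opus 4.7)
The plan is to translate both sides of the equivalence into properties of the Dieudonn\'e module of $\tilde{A}$, with the Shimura--Taniyama formula as the bridge between CM-theoretic data and the action of Frobenius. I would first recall that by a theorem of Oort, $\tilde{A}$ is isomorphic over $\overline{\F}_q$ to a product of supersingular elliptic curves precisely when $\tilde{A}$ is superspecial, i.e.\ $\tilde{A}[p] \cong I_{1,1}^g$; equivalently, Frobenius $F$ and Verschiebung $V$ on $\tilde{A}$ agree up to an automorphism, which at the level of endomorphisms reads $F^2 \in p \cdot \Aut(\tilde{A})$.

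The main computation uses \Fref{thm:shimura_taniyama_formula}(2) to identify $\Fr_\mathfrak{P}$ with $\pi_\mathfrak{P} \in \mathcal{O}_K$ and to expand the ideal $(\pi_\mathfrak{P}) = N_{\Phi'}(N_{k/K'}(\mathfrak{P}))$. Working in the Galois extension $L$ and using that $\Phi'$ lifts to $\Phi^{-1}_L$, the ideal $(\pi_\mathfrak{P})\mathcal{O}_L$ expands as a product over $\phi \in \Phi_L$ of $\phi^{-1}(\mathfrak{P}_L)^e$ for an appropriate exponent $e$. Since the orbits of $\sigma$ on $\Hom(K, L)$ correspond to the primes of $\mathcal{O}_K$ above $p$ and have length equal to the residue degree, the hypothesis $\sigma\overline{\Phi} = \Phi$, equivalently $\sigma\Phi = \overline{\Phi}$, forces each $\sigma$-orbit to alternate between $\Phi_L$ and $\overline{\Phi}_L$. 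Substituting back into the factorisation yields $(\pi_\mathfrak{P}^2) = (p)^{f(\mathfrak{P})}$ in $\mathcal{O}_K$, where $f(\mathfrak{P})$ is the residue degree; combined with the Weil identity $\pi_\mathfrak{P}\overline{\pi_\mathfrak{P}} = p^{f(\mathfrak{P})}$, this makes $\pi_\mathfrak{P}^2/p^{f(\mathfrak{P})}$ a unit of $\mathcal{O}_K$. Applying \Fref{lem:local_local_lemma} with $m = 2$, $n = f(\mathfrak{P})$ then yields at least that $\tilde{A}[p]$ is local-local. The converse direction reverses the ideal calculation: superspeciality pins down the appropriate factorisation of $(\pi_\mathfrak{P})$, which forces the alternating pattern of $\Phi_L$ versus $\overline{\Phi}_L$ under the action of $\sigma$, and hence $\sigma\overline{\Phi} = \Phi$.

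The main obstacle will be refining the local-local conclusion of \Fref{lem:local_local_lemma} to the strictly stronger superspecial conclusion, since that lemma cannot by itself distinguish, e.g., $I_{2,1}$ from $I_{1,1}^2$ in the genus $2$ setting of \Fref{ex:genus_2_p_torsion}. To close this gap, one must verify that the explicit factorisation of $\pi_\mathfrak{P}$ arising from $\sigma\overline{\Phi} = \Phi$ also controls each summand $\mathcal{D}(\mathfrak{p})$ of the Dieudonn\'e module individually: within the $\sigma$-orbit above a prime $\mathfrak{p}$, the alternation between $\Phi$ and $\overline{\Phi}$ forces $F$ and $V$ to agree up to a unit on $\mathcal{D}(\mathfrak{p})$, so the corresponding $p$-divisible group summand is a power of $I_{1,1}$ rather than a more general local-local $BT_1$ group scheme. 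Assembling this across all $\mathfrak{p} \mid p$ yields the required $I_{1,1}^g$ structure on $\tilde{A}[p]$ and completes the superspecial conclusion.
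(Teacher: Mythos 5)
First, note that the paper does not prove this lemma at all: it is quoted directly from \citet{ekedahl1987}, so there is no internal proof to compare yours against, and I am judging the proposal on its own. Your target is the right one (reduce to the superspecial condition $F\mathcal{M}=V\mathcal{M}$, equivalently $F^2\mathcal{M}=p\mathcal{M}$, on the Dieudonn\'e module, via Oort's theorem that $a=g$ characterises products of supersingular elliptic curves), and so is the combinatorial mechanism (alternation of $\Phi_L$ and $\overline{\Phi}_L$ along $\sigma$-orbits). But the route you choose --- the ideal factorisation of $(\pi_\mathfrak{P})$ supplied by \Fref{thm:shimura_taniyama_formula} --- cannot carry the argument, because that factorisation records only the normalised valuations of Frobenius, i.e.\ the Newton slopes, and the Newton polygon does not determine the $a$-number. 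The paper itself contains the counterexample: in \Fref{thm:genus_3_reduction_types}, Type 2 gives $N_{\Phi'}(\mathfrak{p}_1)=(p)$ with $N(\mathfrak{p}_1)=p^2$, while Type 4 gives $(\pi_{\mathfrak{P}})$ a power of $(p)^3$ with $N(\mathfrak{p}_1)=p^6$; in both cases $(\pi_\mathfrak{P}^2)=(q)$ and all slopes equal $1/2$, yet the first reduction is superspecial and the second has $a(\tilde{A})=1$. Consequently your intermediate identity ``$(\pi_\mathfrak{P}^2)=(p)^{f(\mathfrak{P})}$'' is neither equivalent to $\sigma\overline{\Phi}=\Phi$ nor sufficient for superspeciality, and the converse direction as you describe it (``superspeciality pins down the factorisation of $(\pi_\mathfrak{P})$, which forces the alternating pattern'') cannot work: the factorisation is the same whether or not $\tilde{A}$ is superspecial. (A smaller slip: \Fref{lem:local_local_lemma} requires $m\geq n$, so the exponents should be $m=2r$, $n=r$ with $q=p^r$, not $m=2$, $n=f(\mathfrak{P})$.)

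The step you defer to your final paragraph is therefore not a refinement but the entire content of the lemma, and it must be carried out on the Dieudonn\'e module itself rather than on ideals. Concretely: since $\mathcal{O}_K\hookrightarrow\End(\tilde{A})$ and $p$ is unramified, $\mathcal{M}$ is free of rank one over $\mathcal{O}_K\otimes_{\Z_p}W(\overline{\F}_q)$ and splits as $\bigoplus_\phi \mathcal{M}_\phi$ over the $p$-adic embeddings $\phi$ of $K$; Frobenius sends $\mathcal{M}_\phi$ into $\mathcal{M}_{\sigma\phi}$, and the reduction of the CM type (the Hodge filtration, i.e.\ $\mathrm{Lie}(\tilde{A})$) tells you that $F\mathcal{M}_\phi=p\mathcal{M}_{\sigma\phi}$ for $\phi$ in one half of $\Hom(K,\C)$ determined by $\Phi$ and $F\mathcal{M}_\phi=\mathcal{M}_{\sigma\phi}$ for the other. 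Then $F\mathcal{M}=V\mathcal{M}$ holds if and only if these two cases alternate under $\sigma$ on every orbit, which is exactly $\sigma\Phi=\overline{\Phi}$, i.e.\ $\sigma\overline{\Phi}=\Phi$. This eigenspace-by-eigenspace description of $F$ is the statement you would actually need to establish (it is where the crystalline form of the Shimura--Taniyama relation, as in \citet{yu2004}, enters); without it, both implications of the lemma remain unproved.
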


Now we are ready to state the generalisation of Deuring's result about the reduction of CM elliptic curves, and \Fref{thm:genus_2_reduction_types} in the dimension 2 case. The following theorem decides the reduction types (see \Fref{ex:genus_3_p_torsion}) of 3-dimensional abelian varieties with CM by a cyclic sextic CM field.

\begin{theorem}\label{thm:genus_3_reduction_types}
Let $K$ be a cyclic sextic CM field. Let $A/k$ be a principal abelian variety of dimension 3 over a number field $k$ with CM by $\mathcal{O}_K$ and the unique primitive CM type. Let $\mathfrak{P}$ be a prime of $k$, $\mathfrak{p}_1 = \mathfrak{P} \cap \mathcal{O}_K$, $p = \mathfrak{p}_1 \cap \Z$. Assume that $p$ is unramified in $K$. Then the reduction of $A \mod \mathfrak{P}$, $\tilde{A}$, and $f(\tilde{A})$, $a(\tilde{A})$ are determined by the decomposition of $p$ in $\mathcal{O}_K$ as follows:
\begin{enumerate}
\item If $p = \mathfrak{p}_1 \mathfrak{p}_2 \mathfrak{p}_3 \mathfrak{p}_4 \mathfrak{p}_5 \mathfrak{p}_6$, then $\tilde{A}$ is ordinary, $f(\tilde{A}) = 3$, $a(\tilde{A}) = 0$.
\item If $p = \mathfrak{p}_1 \mathfrak{p}_2 \mathfrak{p}_3$, then $\tilde{A}$ is superspecial, $f(\tilde{A}) = 0$, $a(\tilde{A}) = 3$.
\item If $p = \mathfrak{p}_1 \mathfrak{p}_2$, then $f(\tilde{A}) = 0$, $a(\tilde{A}) = 2$.
\item If $p = \mathfrak{p}_1$, then $f(\tilde{A}) = 0$, $a(\tilde{A}) = 1$.
\end{enumerate}
\end{theorem}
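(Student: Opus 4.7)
The plan is to treat the four splittings separately, exploiting that by Section~\ref{sec:cyclic_cm_types} we may take the unique primitive CM type to be $\Phi=\{1,\tau,\tau^2\}$, where $\tau$ generates $\Gal(K/\Q)=C_6$ and $\tau^3$ is complex conjugation. By Lemma~\ref{lem:abelian_ext_reflex_field}, $K'=K$ and $\Phi'=\Phi^{-1}=\{1,\tau^4,\tau^5\}$. Since $p$ is unramified and $\Gal(K/\Q)$ is abelian, the decomposition group at any prime above $p$ is generated by a common Frobenius $\sigma\in\Gal(K/\Q)$, and the four cases correspond to $\sigma\in\{1,\tau^3,\tau^2,\tau\}$ of orders $1,2,3,6$ respectively.

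The main tool will be the decomposition $\mathcal{D}=\bigoplus_i \mathcal{D}(\mathfrak{p}_i)$ of Section~\ref{sec:reduction_cm_abvar}. For each prime $\mathfrak{p}_i$ of residue degree $f_i$, let $O_i$ be the $\sigma$-orbit of embeddings above $\mathfrak{p}_i$, ordered cyclically as $\phi_0,\ldots,\phi_{f_i-1}$, and take a corresponding basis $e_0,\ldots,e_{f_i-1}$ of $\mathcal{D}(\mathfrak{p}_i)$. Using Theorem~\ref{thm:shimura_taniyama_formula} together with the identification of $\Phi$ with the $\mathcal{O}_K$-action on $\mathrm{Lie}(\tilde{A})$, one gets $F(e_j)=a_j\,e_{j+1\bmod f_i}$ where $a_j\in W(\overline{\F}_p)^\times$ if $\phi_j\in\overline{\Phi}$ and $a_j\in p\cdot W(\overline{\F}_p)^\times$ if $\phi_j\in\Phi$. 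From this two consequences are immediate: the Newton slope contributed by $\mathcal{D}(\mathfrak{p}_i)$ equals $|\Phi\cap O_i|/f_i$, which determines $f(\tilde{A})$; and $a(\mathcal{D}(\mathfrak{p}_i))=\dim(\ker F\cap\ker V)$ on $\mathcal{D}(\mathfrak{p}_i)/p$ equals the number of \emph{descents} in the cyclic $\Phi$-pattern of $O_i$, i.e.\ indices $j$ with $\phi_j\in\Phi$ and $\phi_{j+1\bmod f_i}\in\overline{\Phi}$.

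Plugging in case by case: In case (1), the six singleton orbits give three rank-$1$ \'etale summands $\Z/p\Z$ and three rank-$1$ multiplicative summands $\mu_p$, which assemble into $\tilde{A}[p]\cong L^3$, so $f=3$, $a=0$. In case (2), $\sigma=\tau^3$ satisfies $\sigma\overline{\Phi}=\tau^6\Phi=\Phi$, so by Lemma~\ref{lem:ssing_product_criterion} $\tilde{A}$ is geometrically a product of supersingular elliptic curves, hence superspecial with $f=0$, $a=3$; the descent formula independently gives $a=3$ from three length-$2$ orbits with pattern $(1,0)$. In case (3), the orbits $\{1,\tau^2,\tau^4\}$ and $\{\tau,\tau^3,\tau^5\}$ have $\Phi$-patterns $(1,1,0)$ and $(1,0,0)$, yielding slopes $2/3$ and $1/3$ (so $f=0$) and one descent each (so $a=2$). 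In case (4), the unique length-$6$ orbit has pattern $(1,1,1,0,0,0)$, giving slope $1/2$ of multiplicity $6$ (so $\tilde{A}$ is supersingular) together with exactly one descent, so $f=0$ and $a=1$.

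The main obstacle is justifying the Dieudonn\'e-module presentation in paragraph two, in particular that $F(e_j)$ is divisible by $p$ precisely when $\phi_j\in\Phi$. This follows from Theorem~\ref{thm:shimura_taniyama_formula}(1), which identifies the reduction of the $N_{\Phi'}(\mathfrak{p})$-multiplication with the relative $N(\mathfrak{p})$-power Frobenius, combined with the standard description of the Hodge filtration $\omega=VM/pM$ as the $\overline{\Phi}$-isotypic summand of the cotangent space. Once this is granted the remainder is pattern-counting in cyclic binary strings, and Lemma~\ref{lem:local_local_lemma} can be invoked as an independent sanity check in cases (3) and (4) to confirm that no \'etale or multiplicative pieces appear.
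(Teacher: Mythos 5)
Your proof is correct, and it reaches the same conclusions as the paper, but the core of the argument is genuinely different. The paper computes the reflex norm $N_{\Phi'}(\mathfrak{p}_1)$ explicitly in each case and then combines \Fref{lem:local_local_lemma}, the additivity $a(\mathcal{D})=\sum_i a(\mathcal{D}(\mathfrak{p}_i))$ with each local-local summand contributing at least $1$, and Ekedahl's product criterion (\Fref{lem:ssing_product_criterion}) to pin down $a(\tilde{A})$ by ruling out the superspecial value; you instead put all the weight on the explicit cyclic presentation of each $\mathcal{D}(\mathfrak{p}_i)$ --- $F$ shifting an eigenbasis indexed by the $\sigma$-orbit and being divisible by $p$ exactly on the $\Phi$-part --- and read off slopes and $a$-numbers by counting descents in the orbit patterns. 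That presentation is precisely the content of the result of \citet{yu2004} already invoked in \Fref{sec:reduction_cm_abvar} (and your sketch via \Fref{thm:shimura_taniyama_formula} together with the Hodge filtration is the standard proof of it), so it is legitimate, but you should cite it rather than leave it as a flagged obstacle; note also that $\ker F\cap\ker V$ on $\mathcal{D}(\mathfrak{p}_i)/p$ literally counts ascents $\overline{\Phi}\to\Phi$ rather than descents, which is harmless since the two counts agree cyclically. What your route buys: it also yields the Newton slopes (matching the $(1/3,2/3)$ and all-$1/2$ behaviour seen in the examples of \Fref{sec:higher_genus_construction}), and in cases (3) and (4) it determines $a(\tilde{A})$ directly by counting, whereas the paper's use of the product criterion only excludes $a=3$ and must lean on the decomposition into local-local summands (case 3) or on the classification of cyclic modules (case 4). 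What the paper's route buys: it needs only the norm computation and coarse structural facts, not the full eigenbasis description of the Dieudonné module.
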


\begin{proof}
By \Fref{lem:abelian_ext_reflex_field}, $K$ is its own reflex field. Hence we have inclusions $k \supset K = K' \supset \Q$.
	Denote the generator of $\Gal(K/\Q) = C_6$ by $\tau$. According to \Fref{sec:cyclic_cm_types}, we have one primitive CM type which, without loss of generality, we may assume is given by $\Phi = \{1,\tau,\tau^2\}$. Complex conjugation corrsponds to the order two element $\tau^3$ and so the conjugate of $\Phi$ is $\overline{\Phi} = \{\tau^3,\tau^4,\tau^5\}$. Finally, the reflex type is $\Phi' = \Phi^{-1} = \{1,\tau^4,\tau^5\}$.

If $p = \prod_{i=1}^\ell \mathfrak{p}_i$ is the decomposition of $p$ in $K$, then $A[p] = \bigoplus_{i=1}^\ell A[\mathfrak{p}_i]$. Let $\F_q$ be the residue field $\mathcal{O}_K/\mathfrak{P}$ and let $\Fr_\mathfrak{P}$ be the Frobenius fixing $\F_q$. By \Fref{thm:shimura_taniyama_formula}, the ideal $(\pi_\mathfrak{P})$ corresponding to $\Fr_\mathfrak{P}$ in $\mathcal{O}_K \hookrightarrow \End(\tilde{A})$ is a power of $N_{\Phi'}(\mathfrak{p}_1)$.

\emph{Type 1.} The prime $p$ splits completely as $p = \mathfrak{p}_1 \mathfrak{p}_2 \mathfrak{p}_3 \mathfrak{p}_4 \mathfrak{p}_5 \mathfrak{p}_6$ and $\tau$ permutes the factors cyclically. We can assume that $\tau(\mathfrak{p}_i) = \mathfrak{p}_{i+1}$. We compute $N_{\Phi'}(\mathfrak{p}_1) = \mathfrak{p}_1\mathfrak{p}_5\mathfrak{p}_6$, and so $\Fr_\mathfrak{P}$ acts trivially on $A[\mathfrak{p}_2] \oplus A[\mathfrak{p}_3] \oplus A[\mathfrak{p}_4]$. Hence the étale part of $\tilde{A}[p]$ has order $p^3$ and so $\tilde{A}$ is ordinary with $p$-rank $f(\tilde{A}) = 3$ and $a$-number $a(\tilde{A}) = 0$.

\emph{Type 2.} The prime $p$ splits as $p = \mathfrak{p}_1 \mathfrak{p}_2 \mathfrak{p}_3$ and we can assume that $\kappa = \tau^2$ (generating the $C_3$ subgroup of $C_6$) acts as $\kappa(\mathfrak{p}_i) = \mathfrak{p}_{i+1}$. Hence, $\tau$ acts as the permutation $(3\,1\,2)$. We compute $N_{\Phi'}(\mathfrak{p}_1) = \mathfrak{p}_1\mathfrak{p}_3\mathfrak{p}_2 = p$. By \Fref{lem:local_local_lemma}, $\tilde{A}[p]$ is a local-local group scheme, and $f(\tilde{A}) = 0$. The decomposition of the Dieudonné module of $\tilde{A}[p]$ is $\mathcal{D} = \mathcal{D}(\mathfrak{p}_1) \oplus \mathcal{D}(\mathfrak{p}_2) \oplus \mathcal{D}(\mathfrak{p}_3)$, and all the factors come from local-local group schemes. Hence, $a(\tilde{A}) = 3$.

\emph{Type 3.} The prime $p$ splits as $p = \mathfrak{p}_1 \mathfrak{p}_2$ and we can assume that $\lambda = \tau^3$ (generating the $C_2$ subgroup of $C_6$) acts as $\lambda(\mathfrak{p}_i) = \mathfrak{p}_{i+1}$. Hence, $\tau$ also acts as $\tau(\mathfrak{p}_i) = \mathfrak{p}_{i+1}$. We compute $N_{\Phi'}(\mathfrak{p}_1) = \mathfrak{p}_1\mathfrak{p}_1\mathfrak{p}_2 = p\mathfrak{p}_1$ and so $f(\tilde{A}) = 0$. Let $\sigma$ be the Frobenius automorphism in $\Gal(k/K')$ corresponding to $\mathfrak{P}$. The decomposition group of $\mathfrak{P}$ is $C_3 < C_6$ and so $\sigma$ is equal to one of $\{\tau^2, \tau^4\}$. In any case, $\sigma \overline{\Phi} \neq \Phi$, and so by \Fref{lem:ssing_product_criterion} $\tilde{A}$ is not isomorphic to a product of supersingular elliptic curves. Hence, $a(\tilde{A}) = 2$.

\emph{Type 4.} Here we simply have $A[p] = A[\mathfrak{p}_1]$. The Frobenius automorphism $\sigma$ corresponding to $\mathfrak{P}$ is equal to one of $\{\tau,\tau^5\}$ (since the decomposition group of $\mathfrak{P}$ is the whole $C_6$). Again, $\sigma \overline{\Phi} \neq \Phi$ and so $\tilde{A}$ is not isomorphic to a product of supersingular elliptic curves. Thus $f(\tilde{A}) = 0$ and $a(\tilde{A}) = 1$.
\end{proof}

Using \Fref{lem:ssing_product_criterion} we can also decide between ordinary and superspecial reduction in arbitrary dimension, at least for principal abelian varieties with CM by a cyclic CM field and primitive CM type.

\begin{theorem}\label{thm:genus_g_reduction_types}
Let $K$ be a cyclic CM field of degree $2g$. Let $A/k$ be a principal abelian variety of dimension g over a number field $k$ with  complex multiplication by $\mathcal{O}_K$ such that the CM type is primitive. Let $\mathfrak{P}$ be a prime of $k$, $\mathfrak{p}_1 = \mathfrak{P} \cap \mathcal{O}_K$, $p = \mathfrak{p}_1 \cap \Z$. Assume that $p$ is unramified in $K$. Then the reduction of $A \mod \mathfrak{P}$, $\tilde{A}$, and $f(\tilde{A})$, $a(\tilde{A})$ are determined by the decomposition of $p$ in $\mathcal{O}_K$ as follows:
\begin{enumerate}
\item If $p = \mathfrak{p}_1 \cdots \mathfrak{p}_{2g}$, then $\tilde{A}$ is ordinary, $f(\tilde{A}) = g$, $a(\tilde{A}) = 0$.
\item If $p = \mathfrak{p}_1 \cdots \mathfrak{p}_g$, then $\tilde{A}$ is superspecial, $f(\tilde{A}) = 0$, $a(\tilde{A}) = g$.
\end{enumerate}
\end{theorem}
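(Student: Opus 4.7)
The strategy is to split the proof by case and reuse the two tools already deployed in the proofs of \Fref{thm:genus_2_reduction_types} and \Fref{thm:genus_3_reduction_types}: the Shimura--Taniyama formula (\Fref{thm:shimura_taniyama_formula}) for Type~1, and the Ekedahl criterion (\Fref{lem:ssing_product_criterion}) for Type~2. Since $K$ is cyclic, hence abelian over $\Q$, \Fref{lem:abelian_ext_reflex_field} gives $K'=K$ and $\Phi'=\Phi^{-1}$. Let $\tau$ generate $\Gal(K/\Q)=C_{2g}$, so that complex conjugation is $\tau^g$, and write $\Phi=\{\tau^{s_1},\dots,\tau^{s_g}\}$ for residues $s_j\in\Z/2g\Z$ satisfying $\{s_j\}\sqcup\{s_j+g\}=\Z/2g\Z$; then $\Phi'=\{\tau^{-s_1},\dots,\tau^{-s_g}\}$.

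For Type~1, I label the $2g$ primes above $p$ so that $\tau(\mathfrak{p}_i)=\mathfrak{p}_{i+1\bmod 2g}$. A direct computation yields
\begin{equation*}
N_{\Phi'}(\mathfrak{p}_1)=\prod_{j=1}^g \mathfrak{p}_{1-s_j\bmod 2g},
\end{equation*}
a product of exactly $g$ of the $2g$ primes, the complementary set being indexed by $\{1-s_j-g\bmod 2g\}$. By \Fref{thm:shimura_taniyama_formula}, $(\pi_\mathfrak{P})$ is a positive power of $N_{\Phi'}(\mathfrak{p}_1)$ and is therefore divisible by exactly those same $g$ primes. Each summand $\tilde A[\mathfrak{p}_i]$ is a group scheme of order $p$, on which $\pi_\mathfrak{P}$ acts as multiplication by its image in $\mathcal{O}_K/\mathfrak{p}_i=\F_p$. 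On the $g$ primes not dividing $(\pi_\mathfrak{P})$ this image is a unit, so Frobenius is bijective and $\tilde A[\mathfrak{p}_i]\cong\Z/p\Z$; on the other $g$ primes Frobenius is zero and $\tilde A[\mathfrak{p}_i]$ is connected. Summing gives $f(\tilde A)=g$ and $a(\tilde A)=0$, i.e.\ $\tilde A$ is ordinary.

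For Type~2, the decomposition group of $\mathfrak{p}_1$ inside $\Gal(K/\Q)=C_{2g}$ has order $2g/g=2$. Since $C_{2g}$ has a unique subgroup of order $2$, namely $\langle\tau^g\rangle$, the Frobenius $\sigma$ at $\mathfrak P$ must equal $\tau^g$, which is complex conjugation. Hence
\begin{equation*}
\sigma\overline{\Phi}=\tau^g\cdot\tau^g\Phi=\tau^{2g}\Phi=\Phi,
\end{equation*}
and \Fref{lem:ssing_product_criterion} forces $\tilde A$ to be isomorphic over $\overline{\F}_q$ to a product of $g$ supersingular elliptic curves, which is superspecial: $\tilde A[p]\cong I_{1,1}^g$, so $f(\tilde A)=0$ and $a(\tilde A)=g$.

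The main technical step is the local étaleness analysis in Type~1: one has to convert the global ideal identity of Shimura--Taniyama into the pointwise statement that $\pi_\mathfrak{P}$ is a unit on $\tilde A[\mathfrak{p}_i]$ precisely when $\mathfrak{p}_i\nmid(\pi_\mathfrak{P})$, which uses crucially that each residue field in Type~1 is $\F_p$. By comparison, Type~2 reduces to the purely group-theoretic fact that $C_{2g}$ has a unique involution, which pins down $\sigma$ as complex conjugation regardless of the choice of $\mathfrak P$; primitivity of $\Phi$ is, in fact, not required for either case.
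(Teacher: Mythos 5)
Your proof is correct and follows the paper's own argument essentially verbatim: for Type 1 you use the Shimura--Taniyama formula to show $N_{\Phi'}(\mathfrak{p}_1)$ is a product of exactly $g$ of the $2g$ primes, so the étale part of $\tilde{A}[p]$ has order $p^g$ (your extra step, reducing $\pi_\mathfrak{P}$ modulo each $\mathfrak{p}_i$ to see which summands are étale, is a slightly more explicit version of the paper's "acts trivially" claim); for Type 2 you identify the Frobenius with the unique involution $\tau^g$ and invoke \Fref{lem:ssing_product_criterion}, exactly as the paper does. The one caveat is your closing aside that primitivity is not required: both cases rest on \Fref{lem:abelian_ext_reflex_field} to identify $(K',\Phi')=(K,\Phi^{-1})$, and that lemma assumes $\Phi$ is primitive, so the claim as stated is not justified.
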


\begin{proof}
Again, $K$ is its own reflex field and $k \supset K = K' \supset \Q$. Fix a generator $\tau$ of $\Gal(K/\Q) = C_{2g}$. Assume the CM type is given by $\Phi = \{\tau^{i_1},\tau^{i_2},\dots,\tau^{i_g}\}$ with distinct exponents $i_m \neq i_n$ for $m \neq n$. Without loss of generality, we can always write it as $\Phi = \{1,\tau^{j_2},\dots,\tau^{j_g}\}$ (again with distinct exponents) by multiplying through by $\tau^{-i_1}$ and so passing to an equivalent type. Complex conjugation corresponds to $\tau^g$ and generates $C_2 < C_{2g}$, and so $\overline{\Phi} = \{\tau^g,\tau^{j_2 + g},\dots,\tau^{j_g + g}\}$. For the reflex type we have $\Phi' = \Phi^{-1} = \{1,\tau^{2g-j_2},\dots,\tau^{2g-j_g}\}$.

If $p = \prod_{i=1}^\ell \mathfrak{p}_i$ is the decomposition of $p$ in $K$, then $A[p] = \bigoplus_{i=1}^\ell A[\mathfrak{p}_i]$. Let $\F_q$ be the residue field $\mathcal{O}_K/\mathfrak{P}$ and let $\Fr_\mathfrak{P}$ be the Frobenius fixing $\F_q$. By \Fref{thm:shimura_taniyama_formula}, the ideal $(\pi_\mathfrak{P})$ corresponding to $\Fr_\mathfrak{P}$ in $\mathcal{O}_K \hookrightarrow \End(\tilde{A})$ is a power of $N_{\Phi'}(\mathfrak{p}_1)$.

\emph{Type 1.} Here $\ell = 2g$ and the prime $p$ splits completely as $p = \mathfrak{p}_1 \cdots \mathfrak{p}_{2g}$ and $\tau$ permutes the factors cyclically. We can assume that $\tau(\mathfrak{p}_i) = \mathfrak{p}_{i+1}$. We compute $N_{\Phi'}(\mathfrak{p}_1) = \mathfrak{p}_1 \mathfrak{p}_{2g-j_2+1} \cdots \mathfrak{p}_{2g-j_g+1}$. Let $Q = \{\mathfrak{p}_1\} \cup \{\mathfrak{p}_2, \dots, \mathfrak{p}_{2g}\} \cap \{\mathfrak{p}_{2g-j_2+1}, \dots, \mathfrak{p}_{2g-j_g+1}\}$, a set of size $g$. Then $\Fr_\mathfrak{P}$ acts trivially on $\bigoplus_{\mathfrak{q} \in Q} A[\mathfrak{q}]$. Hence the étale part of $\tilde{A}[p]$ has order $p^g$ and so $\tilde{A}$ is ordinary with $p$-rank $f(\tilde{A}) = g$ and $a$-number $a(\tilde{A}) = 0$.

\emph{Type 2.} Here $\ell = g$ and the prime $p$ splits as $p = \mathfrak{p}_1 \cdots \mathfrak{p}_g$. The decomposition group of $p$ is equal to $C_2 < C_{2g}$ and so the Frobenius automorphism $\sigma$ in $\Gal(k/K')$ corresponding to $\mathcal{P}$ is equal to complex conjugation $\tau^g$. Hence, $\sigma\overline{\Phi} = \Phi$ and so \Fref{lem:ssing_product_criterion} implies that $\tilde{A}$ is isomorphic to a product of supersingular elliptic curves. In other words, $\tilde{A}$ is superspecial and so $f(\tilde{A}) = 0$, $a(\tilde{A}) = 3$.
\end{proof}

\begin{remark}
Of course \Fref{thm:genus_g_reduction_types} immediately implies the first two cases of \Fref{thm:genus_3_reduction_types}, but we find it useful to have a self-contained proof in that case since it give an example of applying the decomposition theory of Dieudonné modules to reason about $a$-numbers.
\end{remark}

\section{Constructing Supersingular Genus 2 Curves}\label{sec:genus_2_construction}
Recall that in dimension 1, for elliptic curves, the \emph{Hilbert class polynomial} $H_K \in \Z[X]$ of an imaginary quadratic number field K has as roots the $j$-invariants of elliptic curves with CM by the ring of integers $\mathcal{O}_K$ of $K$. The roots also generate the \emph{Hilbert class field} of $K$, the maximal abelian unramified extension of $K$. The \emph{CM method} for elliptic curves uses the reduction of $H_K \pmod{p}$ for special primes $p$ to construct elliptic curves defined over $\F_p$ with a prescribed number of points. In \Fref{sec:reduction_cm_surfaces} we explained that it is also possible to control the type of the resulting elliptic curve. It is either ordinary or supersingular, depending on whether $p$ splits or is inert in $\mathcal{O}_K$, respectively. \citet{broeker2009} used this idea to give an algorithm to construct supersingular elliptic curves.

The generalisation of the CM method to dimension 2, for abelian surfaces, follows from the \emph{Igusa class polynomials} $H_{K,i}$ for $i \in \{1,2,3\}$ of a quartic CM field $K$. The roots of these three polynomials are the Igusa invariants of all genus 2 curves whose Jacobian has CM by the ring of integers of the quartic field $K$. The Igusa class polynomials can be explicitly calculated by computing the Igusa invariants of all possible abelian surfaces with CM by $\mathcal{O}_K$ following the construction described in \Fref{sec:cm_explicit_construction}, see \citet{vanwamelen1999}, \citet{weng2003}, and \citet{streng2014}.

The roots again generate class fields, and the reduction of $H_{K,i} \pmod{p}$ can be used to construct genus 2 curves whose Jacobian has a fixed number of points. Previous work considered only two reduction types of these curves: Ordinary and non-ordinary genus 2 curves, see \citet{weng2003}, and \citet{hitt2011}. Recall \Fref{ex:genus_2_p_torsion} for the $p$-torsion structure of those types.

Let us quickly recall the structure theory of quartic CM fields. They arise in one of three possible structure types:
\begin{itemize}
\item $K/\Q$ is Galois with $\Gal(K/\Q) = C_2 \times C_2$, each CM type is non-primitive and induced from an imaginary quadratic subfield.
\item $K/\Q$ is cyclic Galois with $\Gal(K/\Q) = C_4$, all four CM types are equivalent and primitive, see \Fref{sec:cyclic_cm_types}.
\item $K/\Q$ is non-Galois. Denote its normal closure be $L$, then $\Gal(L/\Q) = D_4$, and each CM type is primitive. There are two equivalence classes of CM types, generated by $(\phi_1,\phi_2)$ and $(\phi_1,\overline{\phi_2})$, respectively.
\end{itemize}

We may put aside the first case of a \emph{biquadratic} quartic CM field, since the only CM type is non-primitive and hence every abelian surface with CM by such a field would be non-simple. This reduces us to the case of elliptic curves, for which the CM method is already described in terms of the reduction of the corresponding Hilbert class fields. Non-ordinary reduction is only possible in the case of a non-Galois quartic CM field. Even though for such fields all possible reduction types as in \Fref{ex:genus_2_p_torsion} occur (see \citet[Theorem 2]{goren1997}), we will deal with the simpler cyclic Galois case which is sufficient for our purpose.

\Fref{thm:genus_2_reduction_types} motivates \Fref{alg:cm_method_genus_2} for constructing supersingular non-superspecial, and superspecial genus 2 curves.

\begin{algorithm}[H]
\caption{}
\begin{algorithmic}[1]
\Require A cyclic quartic CM field K and a type: supersingular non-superspecial, or superspecial.
\Ensure A prime $p$ and a genus-2 curve $C$ defined over (possibly an extension of) $\F_p$ such that $\Jac(C)$ has the required type.
\State Construct a genus 2 curve $C$ which has CM by the ring of integers $\mathcal{O}_K$ of $K$.
\State If the type is superspecial, then find a prime $p$ which splits as $p = \mathfrak{p}_1\mathfrak{p}_2$ in $K$. Otherwise, if the type is supersingular non-superspecial, then find a prime $p$ which remains inert in $K$.
\State Output the tuple $(p, C \mod p)$.
\end{algorithmic}
\label{alg:cm_method_genus_2}
\end{algorithm}

A cyclic field has infinitely many inert primes and hence the runtime of \Fref{alg:cm_method_genus_2} is dominated by constructing the genus 2 curve $C$ in step 1. If the CM field $K$ used as input has known Igusa class polynomials, then the algorithm runs in expected time $O(1)$. For a fixed cyclic CM field we can determine the splitting behaviour of unramified primes using the following lemma.

\begin{lemma}\label{lem:cyclic_cm_prime_splitting}
Let $K$ be a cyclic CM field. Let $f$ be the smallest integer such that $K \subset \Q(\zeta_f)$ is a subfield of the cyclotomic field $\Q(\zeta_f)$ (i.e. $f$ is the finite part of the conductor $\mathfrak{f}$ of $K$). Let $p$ be a prime unramified in $K$. Then the decomposition type of $p$ in $K$ depends only on the residue of $p \pmod{f}$.
\end{lemma}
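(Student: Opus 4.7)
The plan is to exploit the fact that $K$, being abelian over $\Q$, sits inside the cyclotomic field $\Q(\zeta_f)$ by Kronecker--Weber (with $f$ the conductor), so that splitting of $p$ in $K$ is determined by splitting of $p$ in $\Q(\zeta_f)$, which in turn is governed by $p \bmod f$.

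First I would recall the standard identification $\Gal(\Q(\zeta_f)/\Q) \cong (\Z/f\Z)^*$ sending $a \bmod f$ to the automorphism $\zeta_f \mapsto \zeta_f^a$. Under this identification, for any prime $p$ unramified in $\Q(\zeta_f)$ (equivalently $p \nmid f$), the Frobenius element is precisely $[p] \in (\Z/f\Z)^*$. The inclusion $K \subset \Q(\zeta_f)$ then gives a surjective restriction $\Gal(\Q(\zeta_f)/\Q) \twoheadrightarrow \Gal(K/\Q)$, and the Frobenius at any prime of $K$ above $p$ is just the image of $[p]$ under this restriction. Because $K/\Q$ is Galois and abelian, this element of $\Gal(K/\Q)$ does not depend on the choice of prime of $K$ above $p$.

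Next, I would invoke the standard fact that, for an unramified prime in a Galois extension, the decomposition type is entirely determined by the Frobenius: the residue degree $f_p$ equals its order, and the number of primes above $p$ is $[K:\Q]/f_p$. Composing with the previous step, the decomposition type in $K$ is a function of $[p] \in (\Z/f\Z)^*$, hence depends only on $p \pmod f$.

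The only subtle point, and the main obstacle, is ensuring that $p \nmid f$ whenever $p$ is unramified in $K$, so that $[p] \in (\Z/f\Z)^*$ is meaningful. This is the conductor-discriminant characterisation: for an abelian number field, the rational primes that ramify are exactly those dividing the finite part of the conductor, which here is $f$. With this fact cited, the argument above completes the proof.
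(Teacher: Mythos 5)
Your proof is correct and follows essentially the same route as the paper's: embed $K$ in $\Q(\zeta_f)$ via Kronecker--Weber, identify the Frobenius at $p$ with $[p]\in(\Z/f\Z)^\times$, pass to $K$ via the quotient by the subgroup $H$ fixing $K$, and handle the unramifiedness issue with the conductor--discriminant formula. The paper merely phrases the "order of the image of $[p]$ in $(\Z/f\Z)^\times/H$" step more explicitly as the least $f'_p$ with $p^{f'_p}\equiv h\pmod f$ for some $h\in H$, since it later uses this constructive form to compute the residue classes in its examples.
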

\begin{proof}
We give a somewhat constructive proof. The cyclic CM field $K$ is a subfield of $\Q(\zeta_f)$ by the Kronecker--Weber theorem (\citet[Chapter V Theorem 1.10]{neukirch1999}), and $K$ and $\Q(\zeta_f)$ have the same ramified primes by the conductor-discriminant formula (\citet[Chapter VII Corollary 10.8]{neukirch1999}). By \citet[Chapter III Lemma 1.3]{casselsfroehlich1976} the usual inertia degree $f_p$ of $p$ in $\Q(\zeta_f)$ is also equal to the least integer $f_p \geq 1$ such that $p^{f_p} \equiv 1 \pmod f$. Since $p$ is unramified we may assume that it splits in $\Q(\zeta_f)$ as $p = \mathfrak{p}_1 \cdots \mathfrak{p}_g$ for some integer $g$. We have that $[\Q(\zeta_f):\Q] = \varphi(f)$ (where $\varphi(f)$ is Euler's totient function) for cyclotomic fields, and also $[\Q(\zeta_f):\Q] = gf_p$ from the decomposition of $p$. Hence, the assertion for $\Q(\zeta_f)$ follows from the formula $g = \varphi(f)/f_p$. Now let $H < (\Z/f\Z)^\times$ correspond to the subgroup $\Gal(\Q(\zeta_f)/K) < \Gal(\Q(\zeta_f)/\Q)$ fixing $K$. Then the inertia degree $f'_p$ of $p$ in $K$ is equal to the least integer $f'_p \geq 1$ such that $p^{f'_p} \equiv h \pmod{f}$ for $h \in H$. Assume $[\Q(\zeta_f):K] = d$, then $[K:\Q] = \varphi(f)/d$. Hence, if $p = \mathfrak{p}_1 \cdots \mathfrak{p}_{g'}$ in $K$ then $g' = \varphi(f)/(f'_p d)$. Thus the assertion follows for $K$.
\end{proof}

Fix now a cyclic quartic CM field $K$ and let $f$ as in \Fref{lem:cyclic_cm_prime_splitting}, then using the ideas of the proof of \Fref{lem:cyclic_cm_prime_splitting} we can determine the possible residue classes modulo $f$ that correspond to each prime splitting type in $K$. These can then be used in step 2 of \Fref{alg:cm_method_genus_2} instead of trial decomposition.

Using \Fref{lem:cyclic_cm_prime_splitting} works for general cyclic CM fields of degree $2g$, but in the case that $g=2$ we have a much more direct characterisation if we are only interested in superspecial non-supersingular primes of reduction. This follows from the following observation. Since for a cyclic quartic field there are only three splitting types, and two of them are determined by an even number of factors, we can alternatively use the following theorem of Stickelberger to pin down a prime of supersingular non-superspecial reduction.

\begin{theorem}[Stickelberger, see \citet{carlitz1953}]\label{thm:stickelberger_splitting}
Let $K$ be a number field of degree $n$ with discriminant $D$. If $p$ is an unramified prime in $K$, then the number $m$ of primes over $p$ satisfies
\begin{equation*}
\left(\frac{D}{p}\right) = (-1)^{n-m},
\end{equation*}
where $(D/p)$ denotes the Kronecker symbol.
\end{theorem}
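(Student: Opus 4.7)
The plan is to interpret both sides of the identity as the sign of a single permutation, namely the cycle structure of the Frobenius acting on the roots of a defining polynomial. On the one hand, a Frobenius cycle of length $\ell$ contributes $(-1)^{\ell-1}$ to the sign, and cycles are in bijection with primes of $K$ above $p$ (with cycle length equal to the residue degree). On the other hand, this same Frobenius sign can be read off from the discriminant via the Euler criterion applied to a square root of $D$ in the Galois closure.

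Concretely, I would proceed as follows. Pick a primitive element $\alpha\in\mathcal{O}_K$ with minimal polynomial $f\in\Z[X]$ and roots $\alpha_1,\dots,\alpha_n$ in the Galois closure $L/\Q$. After possibly multiplying $\alpha$ by an integer, we may arrange that the index $[\mathcal{O}_K:\Z[\alpha]]$ is coprime to $p$, so that $p$ is also unramified in $\Z[\alpha]$ and the splitting of $p$ in $\mathcal{O}_K$ and $\Z[\alpha]$ agree; moreover $\operatorname{disc}(f)$ and $D$ then differ by a square that is a unit modulo $p$, so have the same Kronecker symbol at $p$. Set $\delta=\prod_{i<j}(\alpha_i-\alpha_j)\in L$, so $\delta^2=\operatorname{disc}(f)$. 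For any $\sigma\in\Gal(L/\Q)$, viewing $\sigma$ as a permutation of the $\alpha_i$'s gives $\sigma(\delta)=\operatorname{sgn}(\sigma)\,\delta$.

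Now choose a prime $\mathfrak{P}_L$ of $L$ above $p$ and let $\sigma=\operatorname{Frob}_{\mathfrak{P}_L}$ be its Frobenius (well-defined since $p$ is unramified). By definition $\sigma(x)\equiv x^p\pmod{\mathfrak{P}_L}$ for all $x\in\mathcal{O}_L$, so in particular $\operatorname{sgn}(\sigma)\,\delta\equiv\delta^p\pmod{\mathfrak{P}_L}$. Dividing by $\delta$ (which is a unit at $\mathfrak{P}_L$ since $p\nmid\operatorname{disc}(f)$) yields $\operatorname{sgn}(\sigma)\equiv\delta^{p-1}=D^{(p-1)/2}\pmod{\mathfrak{P}_L}$, and by Euler's criterion the right-hand side is $\left(\tfrac{D}{p}\right)$. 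Hence $\left(\tfrac{D}{p}\right)=\operatorname{sgn}(\sigma)$ as elements of $\{\pm1\}$.

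It remains to compute $\operatorname{sgn}(\sigma)$ in terms of the splitting of $p$. By standard decomposition theory, the action of $\sigma$ on the set $\{\alpha_1,\dots,\alpha_n\}$ corresponds to its action on the cosets of the decomposition groups lying above $p$, and the cycles have lengths equal to the residue degrees $f_1,\dots,f_m$ of the primes $\mathfrak{p}_1,\dots,\mathfrak{p}_m$ of $K$ above $p$. Since a cycle of length $\ell$ has sign $(-1)^{\ell-1}$, we obtain $\operatorname{sgn}(\sigma)=\prod_{i=1}^m(-1)^{f_i-1}=(-1)^{n-m}$, using $\sum f_i=n$. Combining gives the claim. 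The main subtlety I expect is the reduction to a nice primitive element (ensuring the index is coprime to $p$) so that $\operatorname{disc}(f)$ is a valid substitute for $D$ under the Kronecker symbol; the case $p=2$ also requires a small extra argument, using that $p$ unramified forces $D$ odd and that the Kronecker symbol $\left(\tfrac{D}{2}\right)$ still coincides with the image of $D^{(p-1)/2}$ in the appropriate sense, which can be handled by a separate direct verification.
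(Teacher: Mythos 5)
The paper does not prove this statement at all---it is quoted as a classical result with a pointer to \citet{carlitz1953}---so your attempt can only be measured against the standard argument, which is indeed the one you are reconstructing: read $(-1)^{n-m}$ as the sign of the Frobenius permutation (cycle lengths $=$ residue degrees), and read $\left(\frac{D}{p}\right)$ as the action of Frobenius on a square root $\delta$ of the discriminant via Euler's criterion. That skeleton is correct, and your computation $\mathrm{sgn}(\sigma)=\prod_i(-1)^{f_i-1}=(-1)^{n-m}$ is fine.

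The genuine gap is in your reduction to a primitive element. First, multiplying $\alpha$ by an integer $c$ scales $\operatorname{disc}(f)$ by $c^{n(n-1)}$ and hence multiplies the index $[\mathcal{O}_K:\Z[\alpha]]$ by $c^{n(n-1)/2}$; it can only make the index \emph{more} divisible by $p$, never less. Second, and more seriously, no choice of $\alpha$ works in general: there exist common index divisors (inessential discriminant divisors), primes $p<n$ dividing $[\mathcal{O}_K:\Z[\alpha]]$ for \emph{every} $\alpha$. Dedekind's cubic $\Q[x]/(x^3-x^2-2x-8)$ with $p=2$ is the standard example, and $p=2$ is exactly the prime relevant to how this theorem is used in the paper (Algorithm 2 wants small inert primes, but the theorem is stated for all unramified $p$). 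The fix is to abandon the primitive element entirely: take an integral basis $\omega_1,\dots,\omega_n$ of $\mathcal{O}_K$ and set $\delta=\det\bigl(\sigma_i(\omega_j)\bigr)\in\mathcal{O}_L$, so that $\delta^2=D$ exactly (no index factor), $\delta$ is a unit at $\mathfrak{P}_L$ because $p\nmid D$, and $\sigma(\delta)=\mathrm{sgn}(\sigma)\,\delta$ because $\sigma$ permutes the rows $\sigma_i$. Your Frobenius/Euler computation then goes through verbatim for odd $p$. For $p=2$ you correctly flag that Euler's criterion is unavailable; the clean argument is that $2$ unramified forces $D\equiv 1\pmod 4$, the subfield $\Q(\delta)=\Q(\sqrt{D})$ of $L$ has $\left(\frac{D}{2}\right)=+1$ exactly when $2$ splits in it, i.e.\ exactly when Frobenius fixes $\delta$, which is again the statement $\left(\frac{D}{2}\right)=\mathrm{sgn}(\sigma)$. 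With those two repairs the proof is complete.
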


As we said above, using \Fref{thm:stickelberger_splitting} we obtain an explicit version of \Fref{alg:cm_method_genus_2} to generate supersingular non-superspecial genus 2 curves over $\F_p$, where we can also conveniently control the size of $p$.

\begin{algorithm}[H]
\caption{}
\begin{algorithmic}[1]
\Require A cyclic quartic CM field K and a positive integer $n$.
\Ensure A prime $p$ of size $p \sim 2^n$ and a genus-2 curve $C$ defined over (possibly an extension of) $\F_p$ such that $\Jac(C)$ is supersingular non-superspecial.
\State Construct a genus 2 curve $C$ which has CM by the ring of integers $\mathcal{O}_K$ of $K$. Let $D$ be the discriminant of $K$.
\State Find a prime $p$ of size $p \sim 2^n$ in $K$ such that $(D/p) = -1$.
\State Output the tuple $(p, C \mod p)$.
\end{algorithmic}
\label{alg:cm_method_genus_2_ssing_non_sspec}
\end{algorithm}

\subsection{Alternate Construction with Base Supersingular Genus 2 Curve}\label{sec:alternate_genus_2_construction}
Superspecial abelian surfaces form a subset of supersingular abelian surfaces. In fact, they belong to the same isogeny class by definition. However, if we were to restrict to separable isogenies only, then superspecial surfaces form an isogeny class of their own that is not connected to any supersingular non-superspecial surface.
\begin{lemma}[\cite{oort1975}, page 36]\label{lem:sep_isogeny_sspec_graph}
Let $A$ be an abelian variety of characteristic $p$ and of dimension 2, and let $\phi:E^g\to A$ be an isogeny of degree $d$ for some supersingular alliptic curve $E$. 
If $\phi$ is separable (i.e. $p\nmid d$) then $A$ is superspecial (i.e. $A\cong E^g$).
\end{lemma}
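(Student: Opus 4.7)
The plan is to reduce the claim to the classical uniqueness theorem for superspecial abelian varieties of dimension at least two, via a short analysis of what happens to the $p$-divisible group under a separable isogeny of degree coprime to $p$.

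First, I would let $K = \ker \phi$, a finite flat group scheme of order $d$. Separability combined with $p \nmid d$ forces $K$ to be étale of order coprime to $p$, hence contained in the prime-to-$p$ torsion $E^g[d]$. Since $K$ has trivial intersection with the $p$-primary torsion, $\phi$ restricts to an isomorphism on $p$-divisible groups, i.e. $\phi[p^\infty] : E^g[p^\infty] \xrightarrow{\sim} A[p^\infty]$. In particular, at the level of $p$-torsion group schemes, $A[p] \cong E^g[p] \cong I_{1,1}^g$, so by \Fref{ex:genus_2_p_torsion} we have $f(A) = 0$ and $a(A) = g$; that is, $A$ is superspecial in the $a$-number sense.

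Second, I would upgrade this matching of $p$-torsion to an actual isomorphism of abelian varieties by appealing to the classical theorem (due to Deligne, Shioda, and Ogus, and exactly the statement collected in the cited page of Oort) that over $\overline{\F}_p$, any two superspecial abelian varieties of fixed dimension $g \geq 2$ are isomorphic. Applied to $A$ and $E^g$, this yields $A \cong E^g$.

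The only genuinely delicate point, and the reason $\dim A = 2 \geq 2$ is crucial, is this last uniqueness step: in dimension one the analogous statement fails, since all supersingular elliptic curves share the $p$-torsion scheme $I_{1,1}$ but there are in general several isomorphism classes of them. For $g \geq 2$ the Deligne--Shioda--Ogus theorem collapses this ambiguity and promotes the isomorphism of $p$-torsion groups (together with the mere existence of some isogeny $E^g \to A$) to the honest isomorphism $A \cong E^g$. Everything else in the argument is formal, so this is the step where all the content lies.
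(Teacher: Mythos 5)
The paper offers no proof of this lemma at all --- it is quoted verbatim from Oort --- so there is nothing to compare against except the standard argument, and yours is essentially that argument and is correct. The first step is sound: an isogeny of degree $d$ with $p\nmid d$ has étale kernel killed by $d$, hence meeting $E^g[p^\infty]$ trivially, so $\phi$ induces an isomorphism of $p$-divisible groups; in particular $A[p]\cong E^g[p]\cong I_{1,1}^g$, giving $f(A)=0$ and $a(A)=g$ as in \Fref{ex:genus_2_p_torsion}. The one place to be more careful is the attribution in the second step: the Deligne--Ogus--Shioda uniqueness theorem is a statement about \emph{products of supersingular elliptic curves} (for $g\geq 2$ any two such products over $\overline{\F}_p$ are isomorphic), not literally about abelian varieties with $a=g$. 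To apply it to $A$ you must first invoke the separate classical fact that $a(A)=g$ forces $A$ to be isomorphic to such a product --- for surfaces this is Oort's theorem, and it is exactly the equivalence the paper asserts in its introduction when defining ``superspecial''. Once that intermediate step is made explicit the proof is complete, and it is not circular, since that classification is a different statement from the lemma being proved. A small bonus of your route is that it works verbatim for any $g\geq 2$, whereas the lemma as stated restricts to $\dim A=2$; and, as you rightly note, the uniqueness step is precisely what fails for $g=1$.
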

We can utilise this fact to construct a supersingular non-superspecial surface as an alternative to \Fref{alg:cm_method_genus_2_ssing_non_sspec}.

To produce a base supersingular non-superspecial surface to begin our random walk, we turn to the next proposition, which is a special case of \Fref{thm:genus_2_reduction_types} and \Fref{lem:cyclic_cm_prime_splitting}. It states that the $p$-rank and $a$-number of the fixed hyperelliptic curve $y^2 = x^5 - 1$ over a finite field $k$ depends only on the residue of the characteristic of $k$ modulo 5.
\begin{proposition}[\cite{ibukiyamakatsuraoort1986}, Proposition 1.13]\label{prop:cyclo_5_reduction}
Let $H : y^2 = x^5 - 1$ be a hyperelliptic curve over $\F_{p^r}$, where $p\neq5$, then $H$ is
\begin{enumerate}
\item ordinary if $p\equiv 1 \pmod{5}$,
\item supersingular non-superspecial if $p\equiv 2, 3 \pmod{5}$,
\item superspecial if $p\equiv 4 \pmod{5}$.
\end{enumerate}
\end{proposition}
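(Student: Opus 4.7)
The plan is to recognise this proposition as a direct special case of \Fref{thm:genus_2_reduction_types}, applied to the specific cyclic quartic CM field $K = \Q(\zeta_5)$. First I would identify the CM structure of $\Jac(H)$: the curve $H$ admits the order-5 automorphism $\rho : (x,y) \mapsto (\zeta_5 x, y)$ (defined over any field containing a primitive fifth root of unity), which induces an embedding $\Z[\zeta_5] \hookrightarrow \End(\Jac(H))$. Since $\Jac(H)$ has dimension $g=2$ and $[\Q(\zeta_5):\Q]=4=2g$, this embedding is maximal, so $\Jac(H)$ is a principal abelian surface with CM by $\mathcal{O}_K = \Z[\zeta_5]$. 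The field $K = \Q(\zeta_5)$ is cyclic quartic (its Galois group is $(\Z/5\Z)^\times \cong C_4$), totally imaginary over the totally real subfield $\Q(\sqrt{5})$, and by \Fref{sec:cyclic_cm_types} it carries a unique equivalence class of primitive CM types, which must be the one realised by $\Jac(H)$.

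Next I would determine how $p$ splits in $K$. Since $K$ is the $5$th cyclotomic field, its conductor equals $5$ and the hypothesis $p \neq 5$ ensures $p$ is unramified. By the proof of \Fref{lem:cyclic_cm_prime_splitting} (or directly by the classical description of splitting in cyclotomic fields), the inertia degree of $p$ in $\mathcal{O}_K$ equals the multiplicative order of $p$ in $(\Z/5\Z)^\times$, so the decomposition type depends only on $p \bmod 5$. Concretely:
\begin{itemize}
\item $p \equiv 1 \pmod 5$: order $1$, and $p$ splits completely as $\mathfrak{p}_1\mathfrak{p}_2\mathfrak{p}_3\mathfrak{p}_4$;
\item $p \equiv 4 \pmod 5$: order $2$, and $p = \mathfrak{p}_1\mathfrak{p}_2$ with each factor of inertia degree $2$;
\item $p \equiv 2, 3 \pmod 5$: order $4$, and $p$ is inert.
\end{itemize}

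Finally, I would invoke \Fref{thm:genus_2_reduction_types} to translate each splitting type into a reduction type for $\tilde{A} = \widetilde{\Jac(H)}$: case (1) of that theorem gives ordinary when $p$ splits completely, case (2) gives superspecial when $p$ splits into two primes, and case (3) gives supersingular non-superspecial when $p$ is inert. Matching these against the splitting behaviour above yields exactly the three cases of the proposition.

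The only real obstacle I anticipate is a mild bookkeeping issue at small primes: \Fref{thm:genus_2_reduction_types} is stated for an abelian variety over a number field with good reduction at $\mathfrak{P}$, so one must exhibit a model of $H$ over $\mathcal{O}_K$ (or over a sufficiently large number ring) having good reduction at the chosen prime. For $p \neq 2, 5$ the affine equation $y^2 = x^5-1$ is already smooth and the $\Z[\zeta_5]$-action extends integrally, while for $p = 2$ one replaces $H$ by an equivalent Artin--Schreier-type model so that the Jacobian still has good reduction and retains its CM by $\mathcal{O}_K$; this step is routine but needs to be mentioned. Everything else reduces to applying previously established theorems.
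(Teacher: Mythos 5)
Your proposal is correct and follows essentially the same route as the paper: the paper's proof is a two-line appeal to the fact that $\Jac(H)$ has CM by $\Z[\zeta_5]$ together with \Fref{thm:genus_2_reduction_types} and \Fref{lem:cyclic_cm_prime_splitting}, which is exactly what you do, only with the splitting computation in $(\Z/5\Z)^\times$ and the good-reduction caveat spelled out explicitly.
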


\begin{proof}
The Jacobian of the curve $y^2 = x^5 - 1$ has CM by the ring of integers $\Z[\zeta_5]$ of the cyclotomic field $\Q(\zeta_5)$. The statement then follows immediately from \Fref{thm:genus_2_reduction_types} and \Fref{lem:cyclic_cm_prime_splitting}.
\end{proof}

In conjunction with \Fref{lem:sep_isogeny_sspec_graph}, we can generate supersingular non-superspecial abelian surfaces for $p \equiv 2,3 \pmod{5}$ by taking random walks from the base hyperelliptic curve in \Fref{prop:cyclo_5_reduction} where the steps in the random walk are $(\ell,\ell)$-isogenies with $\ell$ coprime to $p$. For practical applications and in characteristic different from 2, one would use $(2,2)$-isogenies that can be efficiently computed, see \citet{richelot1837}.

\subsection{Examples}\label{sec:genus_2_examples}
We can find a list of quartic CM fields in the genus 2 section of the \emph{Echidna}\footnote{See \url{https://www.i2m.univ-amu.fr/perso/david.kohel/dbs/index.html}.} databases. It lists quartic CM fields $K = Q[x]/(x^4 + Ax^2 + B)$ in the form of tuples $(D,A,B)$ where $D$ is the discriminant of the totally real subfield $K_0 \subset K$. Hence, $A^2 - 4B = m^2D$ for some $m$.

Let us for example consider the cyclic quartic CM field $K$ given by the tuple $[5, 65, 845]$. Hence, the discriminant of the totally real subfield is $D_{K_0} = 5$ and for the discriminant of $K$ we compute $D_K = 5^3 \cdot 13^2$. 

We find that the integer part of the conductor of $K$ is $f = 5 \cdot 13 = 65$, and using \Fref{lem:cyclic_cm_prime_splitting} the we can compute the splitting behaviour of unramified primes $p$ in $K$ based on their residue $p \mod 65$. We find the following rule:
\begin{equation*}
p = \begin{cases}
\mathfrak{p}_1, \text{ if } p \mod 65 \in \{\pm 2, \pm 3, \pm 7, \pm 8, \pm 12, \pm 17, \pm 18, \pm 22, \pm 23, \pm 27, \pm 28, \pm 32\}, \\
\mathfrak{p}_1 \mathfrak{p}_2, \text{ if } p \mod 65 \in \{-24, -19, -16, -1, 4, 6, 9, 11, 14, 21, 29, 31\}, \\
\mathfrak{p}_1 \mathfrak{p}_2 \mathfrak{p}_3 \mathfrak{p}_4, \text{ if } p \mod 65 \in \{-31, -29, -21, -14, -11, -9, -6, -4, 1, 16, 19, 24\}.
\end{cases}
\end{equation*}

The class group of $\mathcal{O}_K$ is cyclic of order 2 and using the ideas of \Fref{sec:cm_explicit_construction} or the Igusa class polynomials of $K$ we can explicitly construct two genus 2 curves with CM by $\mathcal{O}_K$:
\begin{align*}
C_1 : y^2 &= -1331 x^6 + 1936 x^5 + 6160 x^4 - 4760 x^3 - 8800 x^2 - 748 x - 552, \\
C_2 : y^2 &= -103259 x^6 - 29744 x^5 - 348400 x^3 + 293172 x - 79888.
\end{align*}

Consider for example $p = 2^{128} + 51$ which satisfies $p \equiv -18 \mod 65$. Then the reduced curves $\tilde{C}_1 = C_1 \mod p$ and $\tilde{C}_2 = C_2 \mod p$ are supersingular non-superspecial and defined over $\F_p$. Both curves are isogenous to a product of supersingular elliptic curves and so their endomorphism algebras are isomorphic to the matrix algebra $M_2(B_{p,\infty})$ of $2 \times 2$ matrices over the definite quaternion algebra $B_{p,\infty}$ ramified at $p$. Their endomorphism rings are maximal orders in that algebra, and the ring of integers $\mathcal{O}_K$ of the CM field $K$ injects into $\End(\tilde{C}_1)$ and $\End(\tilde{C}_2)$.

Note that the isogeny $\phi: \Jac(\tilde{C}_i) \to E \times E$ has to be inseparable, as separable isogenies leave the $p$-torsion group scheme invariant. Denote by $E[\Fr_p]$ the kernel of $\Fr_p : E \to E^{(p)}$. Then we have that $\Jac(\tilde{C}_i) = (E \times E)/\psi_{a,b}(\alpha_p)$ via some appropriate embedding of the form $\psi_{a,b} : \alpha_p \hookrightarrow E \times E$ induced by the two isomorphisms $\alpha_p \overset{a}{\to} E[\Fr_p] \overset{b^{-1}}{\to} \alpha_p$
such that $a/b \in \overline{\F}_p \cong \End(\alpha_p)$ and $a/b \neq \F_{p^2}$, see \citet{oort1975}.

\section{Constructing Higher Genus Curves with $p$-rank Zero}\label{sec:higher_genus_construction}
Unlike in genus 2, not every genus 3 curve is hyperelliptic. We restrict ourselves to the 5-dimensional hyperelliptic locus $\mathcal{H}_3$ of the 6-dimensional moduli space $\mathcal{M}_3$ of genus-3 curves. The genus 3 analogue of the Igusa class polynomials as described in \Fref{sec:genus_2_construction} are the (hyperelliptic) \emph{Shioda class polynomials} $H_{K,i}$ for $i \in \{1,2,3,4,5\}$ of a sextic CM field $K$. As with the Igusa class polynomials in genus 2, the Shioda class polynomials can be explicitly calculated by computing the Shioda invariants of all possible 3-dimensional abelian varieties coming from hyperelliptic genus 3 curves with CM by $\mathcal{O}_K$ following the construction described in \Fref{sec:cm_explicit_construction}, see \citet{balakrishnan2016}, \citet{ionica2019}, and \citet{dina2020}. \Fref{thm:genus_3_reduction_types} the motivates an algorithm analogous to \Fref{alg:cm_method_genus_2} for constructing hyperelliptic genus 3 curves with $p$-rank zero and prescribed $a$-number.

\subsection{Complex Multiplication by Cyclotomic Integers}
A different generalisation (also related to the construction of \Fref{sec:alternate_genus_2_construction}) to higher genus can be found in \citet[Section IV.15.4 Example 2)]{shta1961}: Fix a prime $p$ and consider the hyperelliptic curve $C: y^2 = x^\ell - 1$ whose Jacobian $A = \Jac(C)$ has CM by the ring of integers $\Z[\zeta_p]$ of the $\ell$-th cyclotomic field $K = \Q(\zeta_\ell)$. Note that the right hand side $x^\ell - 1$ of the curve equation factors as $x^\ell - 1 = (x - 1)\Phi_\ell(x)$ for the $\ell$-th cyclotomic polynomial $\Phi_\ell(x)$. We immediately see that the genus of $C$ is $g(C) = (\ell-1)/2$ and so the dimension of $A$ is also $(\ell-1)/2$. Again, by \Fref{lem:cyclic_cm_prime_splitting} and the discission in \Fref{sec:reduction_cm_abvar}, the reduction type of $C \mod \mathfrak{P}$ for some prime $\mathfrak{P}$ of $K$ lying above some prime $p = \mathfrak{P} \cap \Z$ depends only on the residue of $p \pmod{\ell}$.

\subsection{Examples}
\citet{weng2001} has computed a few examples of hyperelliptic genus 3 curves with complex multiplication. We consider the totally real field $K_0 = \Q[y]/(y^3 - y^2 - 2y + 1)$ and a quadratic imaginary extension $K = K_0(x)/(x^2 + 1)$ such that $K$ contains $\Q(i)$. Then the curve
\begin{equation*}
C : y^2 = x^7 + 7x^5 + 14x^3 + 7x
\end{equation*}
has CM by $\mathcal{O}_K$.

Let $k$ be some finite field with $\fchar(k) \neq 2$ and consider a genus $g$ hyperelliptic curve defined by an equation $y^2 = f(x)$ for $f(x) \in k[x]$ of degree $2g + 1$ or $2g + 2$. Let $c_i$ denote the coefficient of $x^i$ in the expansion of $f(x)^{(p-1)/2}$, and define for $\ell=0,\dots,g-1$ the $g \times g$ matrix $A_\ell$ with entries $(A_\ell)_{i,j} = (c_{ip-j})^{p^\ell}$. Following \citet{yui1978} and heeding \citet{achterhowe2019} we define the matrix $M = A_{g-1} \cdots A_1 A_0$, and have the following lemma.
\begin{lemma}\label{lem:prank_anumber}
Let $H$ be a genus g hyperelliptic curve defined by $y^2 = f(x)$. Let $A_0$ and $M$ be the matrices as defined above.
\begin{enumerate}
\item The $p$-rank of $H$ is $f(H) = \rank(M)$.
\item The $a$-number of $H$ is $a(H) = g - \rank(A_0)$.
\end{enumerate}
\end{lemma}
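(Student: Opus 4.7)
The plan is to identify the matrix $A_0$ with (a Frobenius twist of the transpose of) the Cartier--Manin matrix acting on the space of regular differentials $H^0(H, \Omega^1_H)$. Once this identification is in hand, both assertions follow from well-known properties of the Cartier operator $C$ and the fact that, over the perfect field $\overline{\F}_p$, rank is invariant under transposition and Frobenius twists.

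First I would fix the standard basis $\omega_i = x^{i-1}\, dx/y$ for $i = 1, \ldots, g$ of $H^0(H, \Omega^1_H)$. Rewriting $\omega_i = x^{i-1} f(x)^{(p-1)/2}\, dx / y^p$ allows the factor $1/y^p$ (a $p$-th power) to be pulled outside the $p^{-1}$-linear operator $C$. Expanding $f(x)^{(p-1)/2} = \sum_k c_k x^k$ and applying the standard rule that $C(x^n\, dx) = 0$ unless $n \equiv -1 \pmod{p}$, together with $C(x^{kp-1}\, dx) = x^{k-1}\, dx$, a direct calculation yields $C(\omega_i) = \sum_{j=1}^{g} c_{jp-i}^{1/p}\, \omega_j$. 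Thus the matrix of $C$ in this basis has $(j,i)$-entry $c_{jp-i}^{1/p}$, and its transpose-and-Frobenius-twist is precisely $A_0$.

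For part (2), the $a$-number satisfies $a(H) = \dim \ker(C \mid H^0(H, \Omega^1_H))$ (the classical identification via Cartier duality of $\ker F$ inside $H[p]$), so rank--nullity gives $a(H) = g - \rank(A_0)$. For part (1), the $p$-rank $f(H)$ equals the stable rank of the iterates $C^n$ for $n$ large (equivalently, the semisimple rank of $C$), and Fitting's lemma applied to the $g$-dimensional space $H^0(H, \Omega^1_H)$ shows that this rank stabilises by $n = g$. Since $C$ is $\sigma^{-1}$-semilinear, the matrix of $C^g$ in the basis $\{\omega_i\}$ is, up to a global Frobenius twist, the product of successive Frobenius twists of the Cartier matrix; reindexing the twists by $\ell = 0, 1, \ldots, g-1$ and noting that $(A_\ell)_{i,j} = (c_{ip-j})^{p^\ell}$ is precisely the $\ell$-th Frobenius twist of $A_0$, this product is $M = A_{g-1} \cdots A_1 A_0$, whose rank therefore equals $f(H)$.

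The main obstacle is the semilinear bookkeeping: tracking transposes, Frobenius twists, and the distinction between the Cartier operator on $H^0(\Omega^1_H)$ and the dual Hasse--Witt operator on $H^1(H, \mathcal{O}_H)$. None of these operations alters rank over $\overline{\F}_p$, but aligning indices exactly with the definitions of $A_\ell$ and $M$ in the statement requires care; the convention matching the lemma is the one established in \citet{yui1978}, with the normalisation clarifications pointed out in \citet{achterhowe2019}.
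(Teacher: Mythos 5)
Your argument is correct, and it is essentially the argument the paper itself relies on: the paper states this lemma without proof, citing \citet{yui1978} and \citet{achterhowe2019}, and your Cartier-operator computation (identifying $A_0$ with a Frobenius twist of the Cartier--Manin matrix, using $a(H)=\dim\ker C$ for the $a$-number, and the stable rank of the $\sigma^{-1}$-semilinear operator $C$, which stabilises by the $g$-th iterate and whose matrix is the twisted product $A_{g-1}\cdots A_0$, for the $p$-rank) is exactly the standard proof from those references. One minor bookkeeping slip: with the paper's convention $(A_\ell)_{i,j}=(c_{ip-j})^{p^\ell}$, the matrix you compute for $C$, with $(j,i)$-entry $c_{jp-i}^{1/p}$, is the entrywise $p$-th root of $A_0$ itself rather than of its transpose; as you correctly note, this has no bearing on any of the rank assertions.
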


Using \Fref{lem:prank_anumber} we can explicitly compute the $p$-rank and $a$-number of the reduction $\tilde{C} = C \mod p$ for various small primes and compare the results to \Fref{thm:genus_3_reduction_types}.

\paragraph{$p = 13$:}
The prime splits as $(13) = \mathfrak{p}_1 \cdots \mathfrak{p}_6$. As expected, we compute the $p$-rank $f(\tilde{C}) = 3$ and $a$-number $a(\tilde{C}) = 0$. Hence the curve $\tilde{C}$ is ordinary. For the $L$-polynomial we find $L_{\tilde{C}}(T) = 2197 T^6 + 676 T^5 + 91 T^4 + 40 T^3 + 7 T^2 + 4 T + 1$, which has Newton slopes $(0,1)$.

\paragraph{$p = 43$:}
The prime splits as $(43) = \mathfrak{p}_1 \mathfrak{p}_2 \mathfrak{p}_3$. In this case, we compute the $p$-rank $f(\tilde{C}) = 0$ and $a$-number $a(\tilde{C}) = 3$. Hence the curve $\tilde{C}$ is superspecial and isomorphic to a product of supersingular elliptic curves. The $L$-polynomial ends up factoring as $L_{\tilde{C}}(T) = (43 T^2 + 1)^3$, which has Newton slopes all $1/2$.

\paragraph{$p = 17$:}
The prime splits as $(17) = \mathfrak{p}_1 \mathfrak{p}_2$. For this example we compute the $p$-rank $f(\tilde{C}) = 0$ and $a$-number $a(\tilde{C}) = 2$. Hence the curve $\tilde{C}$ lies in the codimension 4 or codimension 5 stratum of the moduli space. For the $L$-polynomial we find $L_{\tilde{C}}(T) = 4913 T^6 - 136 T^3 + 1$, which has Newton slopes $(1/3, 2/3)$. Hence $\tilde{C}$ cannot lie in the codimension 5 stratum and so the Jacobian $\Jac(\tilde{C})$ has a $p$-torsion group scheme isomorphic to $I_{3,2}$.

\paragraph{$p = 11$:}
The prime $p = 11$ is inert in $K$. As expected, we compute the $p$-rank $f(\tilde{C}) = 0$ and $a$-number $a(\tilde{C}) = 1$. Hence the curve $\tilde{C}$ lies in the codimension 3 stratum of the moduli space. The Jacobian $\Jac(\tilde{C})$ has a $p$-torsion group scheme isomorphic to $I_{3,1}$. For the $L$-polynomial we find $L_{\tilde{C}}(T) = (11 T^2 + 1)(121 T^4 - 11 T^2 + 1)$, which has Newton slopes all $1/2$. In \Fref{ex:genus_3_p_torsion} we said that a $p$-torsion group scheme $I_{3,1}$ usually has Newton slopes $(1/3,2/3)$, but can also have slopes all $1/2$. Hence, this curve $\tilde{C}$ is one of these outliers. Looking at how the $L$-polynomial factors, we can deduce that $\Jac(\tilde{C})$ is isogenous to $E \times A$ for some supersingular elliptic curve and some supersingular abelian surface $A$. Note that this has to be an inseparable isogeny, similar to the case of supersingular non-superspecial abelian surfaces that we discussed in \Fref{sec:genus_2_examples}.

\section{Endomorphisms of Supersingular Abelian Surfaces}\label{sec:endomorphisms}
Let $A$ be an abelian surface, then we can infer the structure of the endomorphism algebra of $A$ from \emph{Poincaré's reducibility theorem}.
\begin{theorem}[\citet{MumfordAV}, page 174, Corollary 2]
For $X$ simple, the ring $\End_\Q(X)$ is a division ring.
For any abelian variety $X$, if $X=X_1^{n_1}\times\dots\times X_k^{n_k}$, with $X_i$ simple and not isogenous, and $D_i=\End_\Q(X_i)$, then
\begin{equation*}
\End_\Q(X) = M_{n_1}(D_1) \oplus \dots \oplus M_{n_k}(D_k),
\end{equation*}
where $M_k(R)$ is the ring of $k\times k$ matrices over $R$.
\end{theorem}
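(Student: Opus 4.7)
The plan is to split the theorem into two stages. First I would show that for a simple abelian variety $X$ every nonzero endomorphism is an isogeny, which forces $\End_\Q(X)$ to be a division ring. Then I would invoke Poincaré's complete reducibility theorem to decompose an arbitrary $X$ up to isogeny, and finally read off the block-matrix structure of $\End_\Q(X)$.

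For the first stage, take $X$ simple and $\phi \in \End(X)$ nonzero. The identity component $(\ker\phi)^0$ is an abelian subvariety of $X$, so by simplicity it is either $0$ or all of $X$; the latter is excluded since $\phi \neq 0$. Hence $\ker\phi$ is finite, $\phi$ is an isogeny, and there exist $\psi \in \End(X)$ and $n \in \Z_{>0}$ with $\psi\phi = [n]$. Then $\phi^{-1} = \psi/n$ lies in $\End_\Q(X)$, so every nonzero element is a unit and $D := \End_\Q(X)$ is a division algebra over $\Q$.

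The main technical input for the second stage is the existence of isogeny complements: given any abelian subvariety $Y \subseteq X$ there is an abelian subvariety $Z \subseteq X$ with $Y \cap Z$ finite and $Y + Z = X$. I would produce $Z$ by fixing a polarisation $\lambda : X \to X^\vee$, letting $\iota : Y \hookrightarrow X$ denote the inclusion, and taking $Z := (\ker(\iota^\vee \circ \lambda))^0$. Positivity of the Rosati involution associated to $\lambda$ ensures $Y \cap Z$ is finite, and a dimension count gives $Y + Z = X$. Iterating this construction decomposes $X$ up to isogeny as $X_1^{n_1} \times \cdots \times X_k^{n_k}$ with the $X_i$ simple and pairwise non-isogenous. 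This is the step where I expect the only real difficulty, since the existence of $Z$ is not formal and relies essentially on ampleness of the polarising line bundle.

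The endomorphism algebra then factors cleanly. Between simple non-isogenous factors there are no nonzero homomorphisms (any such would necessarily be an isogeny by the argument of the first stage, contradicting non-isogeny), so $\Hom_\Q(X_i^{n_i}, X_j^{n_j}) = 0$ for $i \neq j$ and consequently
\begin{equation*}
\End_\Q(X) \cong \bigoplus_{i=1}^k \End_\Q(X_i^{n_i}).
\end{equation*}
Identifying $\End_\Q(X_i^{n_i})$ with the algebra of $n_i \times n_i$ matrices with entries in $D_i = \End_\Q(X_i)$, via the standard correspondence between morphisms of direct products and matrices of component morphisms, then yields $\End_\Q(X_i^{n_i}) \cong M_{n_i}(D_i)$, establishing the claimed decomposition.
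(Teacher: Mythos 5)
Your sketch is correct and is essentially the standard argument of \citet{MumfordAV} that the paper cites without proof: simplicity forces nonzero endomorphisms to be isogenies (hence invertible in $\End_\Q$), Poincar\'e complete reducibility via the complement $(\ker(\iota^\vee\circ\lambda))^0$ gives the isogeny decomposition, and vanishing of $\Hom$ between non-isogenous simple factors yields the block-matrix structure. Since the paper treats this result as a black box, there is nothing further to compare.
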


More explicitly, let $E_1$ and $E_2$ be elliptic curves. Then, for a reducible (i.e. non-simple) abelian surface given by $E_1\times E_2$,
\begin{equation*}
\End_\Q(E_1\times E_2) = \begin{pmatrix}
\End_\Q(E_1) & \Hom_\Q(E_2,E_1)\\
\Hom_\Q(E_1,E_2) & \End_\Q(E_2)
\end{pmatrix}.
\end{equation*}
In the case where $E_1$ and $E_2$ are not isogenous, we have that $\End_\Q(E_1\times E_2) = \End_\Q(E_1) \times \End_\Q(E_2)$.

Let us now consider the situation of supersingular abelian surfaces. For this, let $p$ be a prime and let $E$ be a supersingular elliptic curve over $\overline{\F}_p$. Then we have that the endomorphism algebra $\End_\Q(E)=\End(E) \otimes \Q = B_{p,\infty}$, where $B_{p,\infty}$ is the definite quaternion algebra ramified at $p$. Recall that a superspecial abelian surface is isomorphic to a power of a supersingular elliptic curve, whereas a supersingular non-superspecial abelian surface is only isogenous to such a power (via an inseparable isogeny as we have noted in \Fref{sec:genus_2_examples}), see \citet{oort1975}. Hence, the endomorphism algebra of a supersingular abelian surface is given by $M_2(B_{p,\infty})$, a central simple algebra over $\Q$ of dimension 16 and degree 4.

Recall that the endomorphism ring $\End(E)$ of a supersingular elliptic curve $E$ is a maximal order in $B_{p,\infty}$. Now we show that a similar statement holds for supersingular abelian surfaces: The endomorphism ring of a supersingular abelian surface over $\overline{\F}_p$ is a maximal order in $M_2(B_{p,\infty})$.

\begin{theorem}\label{Thm:EndRingsAreMaxOrders}
Let $A/\overline{\F}_p$ be a supersingular abelian surface, then $\End(A)$ is a maximal order in $M_2(B_{p,\infty})$.
\end{theorem}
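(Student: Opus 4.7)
The plan is to split into two cases according to Oort's classification recalled in \Fref{sec:genus_2_examples}: either $A$ is superspecial, in which case $A \cong E^2$ for a supersingular elliptic curve $E$, or $A$ is supersingular non-superspecial, in which case there is a purely inseparable isogeny $\phi \colon E^2 \to A$ of degree $p$ with $\ker\phi = \psi_{a,b}(\alpha_p)$. In the first case the claim is immediate: $\End(A) \cong M_2(\End(E))$, and since $\End(E)$ is a maximal order in $B_{p,\infty}$ by Deuring's theorem, the standard fact that $M_n(\mathcal{O})$ is a maximal order in $M_n(D)$ whenever $\mathcal{O}$ is a maximal order in a central simple algebra $D$ completes the argument.

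For the non-superspecial case, the plan is to verify maximality locally by means of the local--global principle for maximal orders in central simple algebras, which reduces the problem to showing that $\End(A)\otimes\Z_\ell$ is a maximal order in $M_2(B_{p,\infty})\otimes\Q_\ell$ for every rational prime $\ell$. For a prime $\ell \neq p$, the isogeny $\phi$ has degree coprime to $\ell$ and therefore induces an isomorphism of $\ell$-adic Tate modules $T_\ell(E^2) \cong T_\ell(A)$. Invoking Tate's theorem over a large enough finite field of definition, one identifies $\End(A)\otimes\Z_\ell$ with $\End(E^2)\otimes\Z_\ell = M_2(\End(E))\otimes\Z_\ell$, which is locally maximal at $\ell$ by the superspecial argument.

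The main obstacle will be the local analysis at $\ell = p$, where $\phi$ is purely inseparable and the $\ell$-adic approach breaks down. Here one passes to Dieudonné modules and uses Tate's theorem for $p$-divisible groups to identify $\End(A)\otimes\Z_p$ with the endomorphism ring of $\mathbb{D}(A[p^\infty])$. Starting from $\mathbb{D}(E^2[p^\infty]) \cong \mathbb{D}(E[p^\infty])^2$ and the explicit embedding $\psi_{a,b}\colon \alpha_p \hookrightarrow E^2$, one obtains $\mathbb{D}(A[p^\infty])$ as an appropriate sublattice; the assumption $a/b \notin \F_{p^2}$ recalled in \Fref{sec:genus_2_examples} should ensure that this sublattice is preserved by a full maximal order in $M_2(B_{p,\infty}) \otimes \Q_p \cong M_2(D_p)$, where $D_p$ denotes the quaternion division algebra over $\Q_p$. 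Since all maximal orders in this central simple $\Q_p$-algebra are conjugate to $M_2(\mathcal{O}_{D_p})$, this identifies $\End(A)\otimes\Z_p$ as one such maximal order, and combining local maximality at all primes via the local--global principle completes the proof.
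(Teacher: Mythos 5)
Your superspecial case and your analysis at primes $\ell\neq p$ are sound, and in substance the $\ell\neq p$ part coincides with the paper's: you identify $\End(A)\otimes\Z_\ell$ with $M_4(\Z_\ell)$ via the Tate module (using that the isogeny $E^2\to A$ has degree $p$, prime to $\ell$), while the paper gets there by proving $\End(A)/n\End(A)\cong\End(A[n])\cong M_4(\Z/n\Z)$ for $p\nmid n$ and passing to the inverse limit; the paper also does not separate the superspecial and non-superspecial cases. Either way, maximality away from $p$ is the easy half.

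The genuine gap is exactly where you locate ``the main obstacle'': maximality at $p$ for non-superspecial $A$, and your sketch does not close it --- the phrase that the condition $a/b\notin\F_{p^2}$ ``should ensure'' that the Dieudonné lattice is preserved by a full maximal order is the entire content of the theorem at $p$, and it is not automatic. Concretely, writing $D_p=B_{p,\infty}\otimes\Q_p$ for the quaternion division algebra over $\Q_p$ with maximal order $\O_{D_p}$, the Dieudonné module $\mathcal{D}$ of $A[p^\infty]$ sits inside $\mathcal{D}(E^2[p^\infty])$ with index $p$, cut out by the line spanned by $(a,b)$ in $\mathcal{D}(E^2[p^\infty])/F\mathcal{D}(E^2[p^\infty])\cong\overline{\F}_p^{\,2}$, on which $M_2(\O_{D_p})$ acts through its quotient $M_2(\F_{p^2})$. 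The stabiliser of that line in $M_2(\F_{p^2})$ is a \emph{proper} subalgebra whenever $a/b\notin\F_{p^2}$ (the scalars, or a copy of $\F_{p^4}$ when $a/b$ is quadratic over $\F_{p^2}$), so $\End(\mathcal{D})\cap M_2(\O_{D_p})$ is a proper suborder of $M_2(\O_{D_p})$; if $\End(A)\otimes\Z_p$ is maximal it must therefore be a \emph{different} maximal order, i.e.\ the stabiliser of some $\O_{D_p}$-lattice other than the obvious one, and exhibiting such a lattice (or otherwise computing the index of $\End(\mathcal{D})$ in a maximal order) is a calculation your proposal does not perform. The paper takes a completely different route here: it makes no use of the Dieudonné module, but instead defines a Deuring-style valuation $v(a\phi)=\ord_p(a)+\tfrac14\ord_p(\deg_i\phi)$ on $\End(A)\otimes\Q$ via the inseparable degree, shows that $\End(A)\otimes\Z_{(p)}$ coincides with the ring of elements integral at $p$, and concludes maximality from the local characterisation of maximal orders as integral closures. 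To make your approach work you would need to carry out the lattice computation above; as written, the crucial step is asserted rather than proved.
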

\begin{proof}
Let $\mathcal{O} = \End(A) \subseteq \End_\Q(A)$ and consider $n>0$ such that $p \nmid n$. Then there exists an isomorphism of abelian groups
\begin{equation*}
A[n]\cong (\Z/n\Z)^4
\end{equation*}
by \cite[page 64, Proposition 1(3)]{MumfordAV}. The endomorphism ring of this abelian group is given by $M_4(\Z/n\Z)$ and we will denote it by $\End(A[n])$.

We claim that $\mathcal{O}/n\mathcal{O}\cong \End(A[n])$. Firstly, consider the map $\mathcal{O}/n\mathcal{O}\to \End(A[n])$. Our first task is to prove that this map is injective, i.e. that $A[n]$ is a faithful module over $\mathcal{O}/n\mathcal{O}$.

Let $\phi\in\mathcal{O}$ such that $\phi$ annihilates $A[n]$, then one can factor out the multiplication-by-$[n]$ map \cite[page 111, Theorem 1]{MumfordAV}. Hence, there exists some $\psi\in\mathcal{O}$ such that $\phi=[n]\circ\psi$, and so $\phi\in \mathcal{O}/n\mathcal{O}$. Now, we have that $\#A[n]=n^{16}$. Since $\End_\Q(A)$ is preserved under isogenies and we have that $\dim\End(E_1\times E_2)=16$, we have that $\#(\mathcal{O}/n\mathcal{O})=n^{16}$, and so $\mathcal{O}/n\mathcal{O}\cong\End(A[n])$.

Now, because $\mathcal{O}$ is a free $\Z$-module, we have that
\begin{align*}
\O\otimes_{\Z} \Z_{\ell}  &=    \O\otimes_{\Z}\varprojlim \Z/\ell^n\Z &&\text{By definition.}\\
            &\cong  \varprojlim \O\otimes_{\Z}\Z/\ell^n\Z &&\text{$\O$ finitely presented, $\Z/\ell^n\Z$ finite length.\footnotemark}\\
            &\cong  \varprojlim \O/\ell^n\O         &&\text{Commutivity between tensor and quotient.}\\
            &\cong  \varprojlim\End(E[\ell^n])       &&\text{Preceding discussion.}\\
            &=    \End_{\Z_{\ell}}(E[\ell^{\infty}])   &&\\
            &\cong  M_2(\Z_{\ell})              &&
\end{align*}
This gives us that $\mathcal{O}_{\ell}\cong M_4(\Z_{\ell})$ as a $\Z_{\ell}$-algebra, and so $\mathcal{O}_{\ell}$ is maximal. \footnotetext{Note that $\O$ is finitely generated as a $\Z$-module, and $\Z/\ell^n\Z$ has finite length (equal to $n$).}

Now all that is left is to show that $\mathcal{O}$ is maximal at the last remaining place $p$. To do so, we use \cite[Proposition 13.3.4]{VoightQuatBook} which states that the valuation ring consisting of all elements of $\End_\Q(A) \otimes \Q_p$ that are integral over $\Z_p$ is the unique maximal $\Z_p$-order in $\End_\Q(A) \otimes \Q_p$. So we need to prove that $\mathcal{O}_{p}$ is a valuation ring and that $\End_\Q(A) \otimes \Q_p$ is a central simple division algebra.

Let $\phi\in \mathcal{O}$, and denote the \emph{inseparable degree} of $\phi$ by $\deg_i \phi$. Then $q\mid \deg_i\phi$ if and only if $\phi=\pi^q\circ\psi$ for some $\psi$, where $\pi$ denotes Frobenius. Set $\deg_i0=\infty$, then the valuation
\begin{align*}
v:\End_\Q(A) &\to \Q\cup\{\infty\}\\
a\phi    &\mapsto \ord_p(a)+\frac{1}{4}\ord_p(\deg_i\phi)
\end{align*}
for $a\in\Q$ and $\phi\in \mathcal{O}$ is well-defined. Factoring an isogeny into its separable and inseparable components gives
\begin{equation*}
\ord_p(\deg_i\phi)=\ord_p(\deg\phi)=\ord_p(\nrd \phi),
\end{equation*}
so the map $v$ is indeed a valuation on $\End_\Q(A)$.

Denote by $\Z_{(p)}$ the localisation of $\Z$ at the ideal $(p)$. Let $\alpha\in\mathcal{O}_{(p)}=\mathcal{O}\otimes \Z_{(p)}$, then $\deg\alpha\in \Z_{(p)}$, so $\alpha$ is in the valuation ring.\\
Now let $\alpha\in\End_\Q(A)$ such that $v(\alpha)>0$, then we can write $\alpha=a\phi$, where $\phi$ is an isogeny that does not factor through any multiplication-by-$k$ map.
Then $v(\alpha)=\ord_p(a)+v(\phi)\geq0$. 
Furthermore, we have that $0\leq v(\phi)$ since the degree of a morphism must be an integer.
Also, we have that $v(\phi)\leq 3/4$ since if $\phi=\pi^k\circ\psi$, where $\pi$ is Frobenius, then $k<4$.
Hence we get $0\leq v(\phi)\leq 3/4$.
Hence we have that $\ord_p(a)\geq -3/4$ and thus $a\in\Z_{(p)}$, and finally $\alpha\in \mathcal{O}_{(p)}$.

So we have shown that $\mathcal{O}_{(p)}$ is maximal, and using \cite[Lemma 9.4.6]{VoightQuatBook}, $\mathcal{O}_p$ is maximal, and using \cite[Lemma 10.4.2]{VoightQuatBook}, we are done.
\end{proof}

\subsection{Small Non-Integer Endomorphisms}
Consider the setting of \Fref{sec:reduction_cm_abvar}: We have an abelian variety $\tilde{A}$ which is the reduction of a principal abelian variety $A$ with CM by a field $K$ of degree $2g$. There is an embedding of endomorphism rings $\End(A) \hookrightarrow \End(\tilde{A})$ and hence an embedding $\mathcal{O}_K \hookrightarrow \End(\tilde{A})$. In particalular, $A$ has \emph{maximal real multiplication} (maximal RM), that is we have an embedding $\mathcal{O}_{K_0} \hookrightarrow \End(\tilde{A})$ where $\mathcal{O}_{K_0}$ is the ring of integers of the maximal real subfield $K_0$ of $K$.

For $g=2$, we have $K_0 = \Q(\sqrt{d})$ and $K = K_0(\sqrt{r})$, where $d$ is a real quadratic fundamental discriminant and $r \in \mathcal{O}_{K_0}$ is totally negative. Let
\begin{equation*}
a = \begin{cases}
\sqrt{d} &\text{ if } d \equiv 2,3 \pmod{4}, \\
\frac{1 + \sqrt{d}}{2} &\text{ if } d \equiv 1 \pmod{4},
\end{cases}
\end{equation*}
then the principal ideal $\mathfrak{a} = (a) \subset \mathcal{O}_{K_0}$ has norm $N_{K_0/\Q}(\mathfrak{a}) = -d$ or $N_{K_0/\Q}(\mathfrak{a}) = (1-d)/4$ in $K_0$, and norm $N_{K/\Q}(\mathfrak{a}) = (N_{K_0/\Q}(\mathfrak{a}))^2 = d^2$ or $N_{K/\Q}(\mathfrak{a}) = (d-1)^2/16$ in $K$, respectively. Hence, there exists a non-integer endomorphism in $\End(\tilde{A})$ of degree $d^2$ or $(d-1)^2/16$, respectively, depending on the residue of $d \pmod{4}$.

\begin{example}
We can return to our previous example of \Fref{sec:genus_2_examples}. The totally real subfield $K_0$ of the cyclic quartic CM field $K$ had discriminant $d = 5$ and so the Jacobians $\Jac(\tilde{C}_1)$ and $\Jac(\tilde{C}_2)$ both have a nontrivial non-integer endomorphism of degree 1.
\end{example}

\subsection{Superspecial Surfaces with Small Endomorphisms}
Given the structure of the endomorphism ring of reducible surfaces earlier in this section, we are able to write down reducible surfaces that are products of elliptic curves with small endomorphisms, see also \cite{LoveBoneh}.

\begin{definition}
Given an integer $M$, we say that an abelian variety $A$ over a field of characteristic $p$ is \emph{$M$-small} if there exists $\alpha\in\End(A)$ with $\deg\alpha\leq M$ such that $\alpha$ is not multiplication by integer.
\end{definition}

This allows us to identify obvious superspecial abelian surfaces with small endomorphisms; such as those that are products of $M$-small supersingular elliptic curves, and their close neighbours in the isogeny graph.

\begin{lemma}
If two supersingular elliptic curves $E_1$ and $E_2$ are $M$-small, then we have that the superspecial abelian surface $E_1\times E_2$ is $M^2$-small.
\end{lemma}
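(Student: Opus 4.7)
The plan is to construct the required endomorphism of $E_1 \times E_2$ directly from the hypothesised small endomorphisms on each factor. Let $\alpha_i \in \End(E_i)$ for $i=1,2$ be the endomorphisms guaranteed by the $M$-smallness of $E_i$, so that $\deg \alpha_i \leq M$ and neither $\alpha_i$ is multiplication by an integer. Set
\begin{equation*}
\alpha = \alpha_1 \times \alpha_2 \in \End(E_1 \times E_2),
\end{equation*}
which, in the matrix description of $\End(E_1 \times E_2)$ recalled just after Poincaré's reducibility theorem, sits in the block-diagonal part with $\alpha_1$ and $\alpha_2$ on the diagonal and zero off-diagonal entries.

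Next I would bound the degree. Since a product isogeny on a product of abelian varieties has degree equal to the product of the degrees of its components, we obtain
\begin{equation*}
\deg \alpha = \deg \alpha_1 \cdot \deg \alpha_2 \leq M \cdot M = M^2.
\end{equation*}

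Finally, I would check that $\alpha$ is not multiplication by an integer on $E_1 \times E_2$. Indeed, if $\alpha = [n]$ for some $n \in \Z$, then restricting to the factor $E_1$ (via the inclusion $E_1 \hookrightarrow E_1 \times E_2$ and the projection back to $E_1$) would force $\alpha_1 = [n]$ on $E_1$, contradicting the hypothesis that $\alpha_1$ is not multiplication by an integer. Therefore $\alpha$ witnesses that $E_1 \times E_2$ is $M^2$-small.

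There is no real obstacle here; the only fact needed beyond the definitions is the multiplicativity of the degree for product isogenies, which is standard. The argument does not use any subtler feature of supersingularity or superspeciality beyond the fact (already stated in the surrounding discussion) that $E_1 \times E_2$ is indeed superspecial.
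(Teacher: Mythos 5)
Your proposal is correct and follows essentially the same route as the paper: take the small non-integer endomorphisms $\alpha_1, \alpha_2$, form the block-diagonal product $\alpha_1 \times \alpha_2$, and use multiplicativity of the degree. You are in fact slightly more careful than the paper's two-line proof, since you also verify explicitly that the product endomorphism is not multiplication by an integer.
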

\begin{proof}
Let $\alpha_i \in \End(E_i)$ such that $\deg(\alpha_i) < M$. Then $\alpha = \alpha_1 \times \alpha_2 \in \End(E_1 \times E_2)$ has degree $\deg(\alpha)<M^2$.
\end{proof}

\begin{lemma}
Let $A$ be a superspecial abelian surface such that there exist $M$-small curves $E_1$ and $E_2$ and an isogeny $\phi: A \to E_1 \times E_2$ such that $\deg(\phi) = N$. Then $A$ is $MN^2$-small.
\end{lemma}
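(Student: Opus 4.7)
My plan is to transport a small non-scalar endomorphism from $E_1\times E_2$ back to $A$ using the dual isogeny, then bound its degree and verify that it is non-scalar.

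First, I would let $\hat\phi\colon E_1\times E_2\to A$ denote the dual isogeny of $\phi$. Since $\phi$ has degree $N$, so does $\hat\phi$, and $\phi\circ\hat\phi=[N]_{E_1\times E_2}$, $\hat\phi\circ\phi=[N]_A$. Because $E_1$ is $M$-small, I can choose a non-scalar $\alpha_1\in\End(E_1)$ with $\deg\alpha_1\leq M$. I would then set
\begin{equation*}
\alpha=\alpha_1\times\mathrm{id}_{E_2}\in\End(E_1\times E_2),
\end{equation*}
which is still non-scalar (if $\alpha=[n]$ then $\alpha_1=[n]$, contradicting the choice of $\alpha_1$) and has degree $\deg(\alpha_1)\cdot 1\leq M$. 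The candidate endomorphism of $A$ is
\begin{equation*}
\beta=\hat\phi\circ\alpha\circ\phi\in\End(A).
\end{equation*}

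The degree bound is immediate from multiplicativity of degrees of isogenies:
\begin{equation*}
\deg\beta=\deg\hat\phi\cdot\deg\alpha\cdot\deg\phi\leq N\cdot M\cdot N=MN^{2}.
\end{equation*}

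The one step that requires a short argument is showing that $\beta$ is not multiplication by an integer. Suppose for contradiction that $\beta=[n]_A$ for some $n\in\Z$. Composing with $\phi$ on the right and using $\phi\circ\hat\phi=[N]$ gives
\begin{equation*}
[n]_{E_1\times E_2}\circ\phi=\phi\circ\beta=\phi\circ\hat\phi\circ\alpha\circ\phi=N\alpha\circ\phi.
\end{equation*}
Since $\phi$ is an isogeny, it is surjective, so I may cancel it on the right to obtain $N\alpha=[n]$ in $\End(E_1\times E_2)$. Passing to $\End_\Q(E_1\times E_2)$ yields $\alpha=[n/N]$, contradicting the non-scalarity of $\alpha$. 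Hence $\beta$ is a non-integer endomorphism of $A$ of degree at most $MN^{2}$, which is precisely the $MN^{2}$-smallness of $A$. The main (very minor) obstacle is this last cancellation step; everything else is a straightforward book-keeping of degrees.
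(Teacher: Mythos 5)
Your proof is correct and follows the same basic route as the paper: conjugate a small non-scalar endomorphism of $E_1\times E_2$ by $\phi$ and its dual. Two points where you improve on the paper's (very terse) argument are worth noting. First, the paper takes $\hat\phi\circ(\alpha_1\times\alpha_2)\circ\phi$ with $\deg\alpha_i\leq M$, whose degree is only bounded by $M^2N^2$; your choice $\alpha=\alpha_1\times\mathrm{id}_{E_2}$ has degree at most $M$ and actually yields the stated bound $MN^2$. Second, the paper does not check that the resulting endomorphism is not multiplication by an integer; your cancellation argument (surjectivity of $\phi$ gives $N\alpha=[n]$, hence $\alpha_1$ would be a rational, hence integer, scalar) closes that gap cleanly. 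So this is the same approach, carried out more carefully.
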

\begin{proof}
Let $\alpha_i\in \End(E_i)$ such that $\deg(\alpha_i) < M$. By composing the isogenies as $\hat{\phi} \circ (\alpha_1 \times \alpha_2) \circ \phi \in \End(A)$, we find an endomorphism of degree $MN^2$.
\end{proof}

\printbibliography

\end{document}